\newcommand{\add}[1]{#1}
\newcommand{\del}[1]{}
\newcommand{\dell}[1]{}
\DeclareSymbolFont{bbold}{U}{bbold}{m}{n}
\DeclareSymbolFontAlphabet{\mathbbold}{bbold}
\newtheorem{remark}[theorem]{Remark}
\newtheorem{assump}{}
\newcounter{assumptions}[section]
\newcommand{\ito}{It\=o}
\newcommand{\bi}{\begin{itemize}}
\newcommand{\ei}{\end{itemize}}
\newcommand{\ba}{\begin{align*}}
\newcommand{\ea}{\end{align*}}
\newcommand{\be}{\begin{equation}}
\newcommand{\ee}{\end{equation}}
\renewcommand{\l}{\left}
\renewcommand{\r}{\right}
\newcommand{\R}{\mathbb{R}}
\renewcommand{\P}{\mathscr{P}}
\renewcommand{\H}{\mathscr{H}}
\newcommand{\Ptwo}{\mathscr{P}_{2}(\mathbb{R})}
\newcommand{\Htwo}{\H^{2}([0,T];\mathbb{R})}
\newcommand{\Ltwo}{\mathscr{L}^{2}}
\newcommand{\tF}{\tilde{\F}}
\newcommand{\F}{\mathscr{F}}
\newcommand{\G}{\mathscr{G}}
\newcommand{\tmb}[1]{\tilde{\mathbb{#1}}}
\newcommand{\Law}{\mathcal{L}}
\newcommand{\tsg}{\tilde{\sigma}}
\newcommand{\sg}{\sigma}
\newcommand{\mb}{\mathbb}
\newcommand{\mbb}{\mathbb}
\newcommand{\eps}{\varepsilon}
\newcommand{\tdW}{d\tilde{W}}
\newcommand{\tW}{\tilde{W}}
\newcommand{\tZ}{\tilde{Z}}
\newcommand{\hX}{\hat{X}}
\newcommand{\hY}{\hat{Y}}
\newcommand{\hZ}{\hat{Z}}
\newcommand{\htZ}{\hat{\tilde{Z}}}
\renewcommand{\a}{\alpha}
\newcommand{\ha}{\hat{\a}}
\title{Wellposedness of Mean Field Games with Common Noise under a Weak Monotonicity Condition}
\author{Saran Ahuja\thanks{Department of Mathematics, Stanford University, Sloan Hall, Stanford, California 94305 \email{(ssunny@stanford.edu)}.}}
\begin{document}
\maketitle

\begin{abstract}In this paper, we consider Mean Field Games in the presence of common noise relaxing the usual independence assumption of individual random noise. We assume a simple linear model with terminal cost satisfying a convexity and a weak monotonicity property. Our main result is showing existence and uniqueness of a Mean Field Game solution using the Stochastic Maximum Principle. The uniqueness is a result of a monotonicity property similar to \add{that of Lasry and Lions}\del{those used in other work}. We use the Banach Fixed Point Theorem to establish an existence over a small time duration and show that it can be extended over an arbitrary finite time duration.
\end{abstract}

\begin{keywords} mean field games, common noise, forward-backward stochastic differential equations, stochastic maximum principle. \end{keywords}

\begin{AMS} 93E20, 60H30, 60H10, 49N70, 49J99.\end{AMS}

\pagestyle{myheadings}
\thispagestyle{plain}
\markboth{SARAN AHUJA}{MEAN FIELD GAMES WITH COMMON NOISE}

\section{Introduction}\label{introduction}
A Mean Field Game, or MFG for short, is a model recently proposed by Lasry and Lions in his series of papers \cite{lasry2006, lasry2006_2,lasry2007} and independently by Caines, Huang, and Malham\'e \cite{huang2007}, who named it differently as \textit{Nash Certainty Equivalence}. It describes a limiting model of a stochastic differential game with a large number of players, symmetric cost, and weak interaction. Specifically, each player executes a stochastic control problem whose cost and/or dynamics depend not only on their own state and control but also on other players' states. However, this interaction is weak in a sense that a player feels the effect of other players only through the empirical distribution. Searching for a Nash equilibrium strategy for $N$-player games is known to be intractable when $N$ is large as the dimensionality grows with the number of players. However, by assuming independence of the random noise in the players' state processes, symmetry of the cost functions, and a mean-field interaction, \del{taking the limit as $N \to \infty$ reduces}\add{we can formally take the limit as $N \to \infty$ and reduce} a problem to solving a fully-coupled system of forward-backward Partial Differential Equations (PDEs). The backward one is a Hamilton-Jacobi-Bellman (HJB) equation for the value function for each player while the forward one is the Fokker-Planck (FP) equation for the evolution of the player's distribution. This limiting system is more tractable and one can use its solution to approximate Nash equilibrium strategies of the $N$-player games. Lasry and Lions studied this limiting model extensively as well as rigorously analyzed the limit in some cases \cite{lasry2006, lasry2006_2,lasry2007}. 

Since the introduction of MFG in 2006, the literature in this area has grown tremendously. See \cite{bensoussan2013, gomes2013survey} or for a recent survey and \cite{cardaliaguet2010} for a summary of a series of Lions' lectures given at the Coll\'ege de France. Gu\'eant wrote an introduction to the area highlighting various applications \cite{gueant2011}. See \cite{gueant2009,gueant2011graph,gueant2012} for several other contributions by Gu\'eant on the subject. Gomes, Mohr, and Souza studied the finite state problem, both in discrete time \cite{gomes2010discrete} and continuous time \cite{gomes2012cont}. Carmona and Delarue \cite{Carmona1} approached the MFG problem from a probabilistic point of view. By using a Stochastic Maximum Principle instead of a Dynamic Programming Principle to formulate the control problem, one gets Forward Backward Stochastic Differential Equations (FBSDEs for short) of McKean-Vlasov type instead of coupled HJB-FP equations.

One important assumption in most of the prior work is independence of the random factors in each player's state processes. From this assumption\del{ and the Law of Large Numbers}, the distribution of each player evolves deterministically in the limit. It is this property that plays a key role in reducing the dimension of this complex problem and making it tractable. However, many models in applications do not satisfy this assumption. For instance, in financial applications, any reasonable model which attempts to understand the interactions of a large number of market participants will have to assume that they are exposed to some type of overall \textit{market} randomness. This \textit{common} random factor is applied to all the players, and, as a result, the independence assumption does not hold. See \cite{carmona2013mean} for an explicit example of a linear-quadratic MFG model with common noise used to model inter-bank lending and borrowing.

The presence of common noise clearly adds an extra layer of complexity to the problem as the empirical distribution of players at the limit now evolves stochastically. Following the PDE approach of Lasry and Lions, common noise then turns HJB-FP equations to stochastic HJB-FP equations which are of Forward Backward Stochastic Partial Differential Equation (FBSPDE for short) type. This FBSPDE is clearly more complicated than the FBPDE counterpart in the original MFG. Alternatively, one could abandon the forward backward coupling and reformulate the problem as a single PDE called the \textit{master equation}. While the master equation contains all the information about the MFG model, it requires new theories and tools as it is a second order infinite-dimensional HJB involving derivatives with respect to probability measures. To the best of our knowledge, there is no existence theory for it and most of the discussion so far has been largely formal. See \cite{bensoussan2014master,carmona2014master,gomes2013} for instance. On the other hand\del{s}, as we shall see, the probabilistic approach of Carmona and Delarue can be extended more naturally to accommodate the common noise. The law of the state process which occurs in the McKean-Vlasov FBSDE from the Stochastic Maximum Principle will simply be replaced by its conditional law given the common Brownian motion path. 



\del{In this work, we consider an MFG model with common noise using the Carmona and Delarue probabilistic framework as outlined in} \dell{\cite{Carmona1}.} \del{Our main result establishes existence and uniqueness of an MFG solution under a \textit{weak monotonicity} condition. We first show existence of a solution over a sufficiently small time duration using the Banach fixed point theorem and give estimates that will allow us to extend a solution to an arbitrary time duration. This extension can be accomplished mainly due to the convexity and weak monotonicity conditions of the cost functions.}

\add{The goal of this paper is to establish existence and uniqueness for MFG models in the presence of common noise. Similar to Carmona and Delarue \cite{Carmona1} for the case of no common noise, we use a probabilistic approach to MFG based on the Stochastic Maximum Principle and operate under a linear-convex framework. Nonetheless, the existence proof in \cite{Carmona1}, which relies on Schauder fixed point theorem applied to a compact subset of \textit{deterministic} flow of probability measures, does not carry over to the case of common noise. In this case, we are working with a much larger space of \textit{stochastic} flow of probability measures of which we cannot find an invariant compact subset. Instead, we apply Banach fixed point theorem to show existence over small time duration and attempt to extend the solution over arbitrary time. To accomplish this, we introduce the \textit{weak monotonicity} assumption (see \ref{a4}) on the cost function. This condition can be viewed as a stronger version of the \textit{weak mean reverting condition} used in \cite{Carmona1} to show existence for the model without common noise. For the uniqueness, our condition, as the name suggests, is weaker than the monotonicity condition of Lasry and Lions used in \cite{cardaliaguet2010,Carmona1,gomes2013}. As a by product, we extend the uniqueness result for MFG without common noise. More importantly, this condition covers many interesting and practical cases which do not verify Lasry and Lions' monotonicity condition; a notable example includes linear-quadratic models. Intuitively, this condition gives a monotone property to the corresponding McKean-Vlasov FBSDE from the Stochastic Maximum Principle. Similar to the result of Peng and Wu \cite{peng1999} for a classical FBSDE, wellposed-ness of this McKean-Vlasov FBSDE can be shown under such monotone property. For notational simplicity, we select a simple state process and running cost function, but from the result of \cite{peng1999}, our result is expected to hold under a more general setting (see Section \ref{summary}). We would like to note that we do not deal with the $N$-player games in this paper. Here, we focus on the wellposed-ness of the MFG models with common noise.}



\add{In parallel to our work, there has been two recent papers \cite{carmona2014commonnoise,lacker2014translation} which contribute to the general existence theory of MFG models with common noise. In \cite{carmona2014commonnoise}, Carmona et al. deal with a notion of \text{weak} solutions as opposed to the \text{strong} solutions constructed in this paper. They show, under great generality, the existence of a weak solution to MFG models with common noise. However, notice that while the assumptions in \cite{carmona2014commonnoise} are quite weak, they do not include many linear-quadratic models which verify our assumptions. In \cite{lacker2014translation}, Lacker and Webster show the existence of a strong solution, but only to a particular class of MFG models with common noise that satisfy a \textit{translation invariant} property. This class of models can be related to a certain MFG model without common noise via a simple transformation. Comparing \cite{lacker2014translation} to our work, they imposes a different type of restriction on the objectives and use different techniques to construct a strong solution.}

The paper is organized as follows. In Section \ref{model}, we introduce a model for an $N$-player stochastic differential game and formulate its limit, an MFG with common noise. We then review the Stochastic Maximum Principle along with some existence and uniqueness results of FBSDE in Section \ref{smp_fbsde}.  In Section \ref{mainresult}, we state and prove our main result, existence and uniqueness of a solution to MFG models with common noise. In Section \ref{lemma}, we provide the proofs for all the main lemmas used in Section \ref{mainresult}.

\section{Model}\label{model}
 In this section, we describe a stochastic differential game model with $N$ players, then formulate the limit problem as an MFG with common noise. \add{As mentioned above, we do not deal with the $N$-player games in this paper. The material in Section \ref{n_player} only serves as a motivation for the formulation of the MFG problem in Section \ref{mfg_formulation} and will not play a role in other parts of the paper.}  
 
 \subsection{N-player Stochastic Differential Games}\label{n_player} Let $T$ be a fixed terminal time, $W^{i} = (W^{i}_{t})_{0 \leq t \leq T}, i =1,2,\dots,N, \tW = (\tW_{t})_{0 \leq t \leq T}$ are one dimensional independent Brownian motions. Consider a stochastic dynamic game with $N$ players. Each player $i  \in \{1,2,\dots,N \}$ controls a state process $X^i_t$ in $\R$ given by 
$$ dX^{i}_{t} = \alpha^{i}_{t}dt + \sg dW^{i}_{t} + \tsg d\tW_t , \quad X^{i}_{0} = \xi^{i}_0$$
by selecting a control $\alpha^{i} = (\alpha^{i}_{t})_{0 \leq t \leq T}$ in $\Htwo$, the set of progressively measurable process $\beta = (\beta_t)_{0\leq t \leq T}$ such that
$$ \mb{E} \l[ \int_0^T \beta_t^2 dt \r] < \infty $$
where $\xi^i_0$ is an initial state of player $i$. We assume that $(\xi^i_0)_{1 \leq i \leq N}$ are independent identically distributed, independent of all Brownian motions, and satisfy $\mb{E}[(\xi_0^i)^2] < \infty$ for all $1 \leq i \leq N$. We will refer to $W^i$ as an \textit{individual noise} or \textit{idiosyncratic noise} and $\tW$ as a \textit{common noise.}

Let $\P(\R)$ denote the space of Borel probability measure on $\R$, $\Ptwo$ denote the subspace of $\P(\R)$ with finite second moment, i.e. a probability measure $m$ such that
$$ \int x^2 dm(x)  < \infty. $$
The space $\Ptwo$ is a complete separable metric space equipped with a Wasserstein metric defined as
\begin{equation}\label{wass}
 W_2(m_1,m_2) = \l( \inf_{\gamma \in \Gamma(m_1,m_2)} \int_{\R^2} |x-y|^2 \gamma(dx,dy) \r)^{\frac{1}{2}}
 \end{equation}
where $\Gamma(m_1,m_2)$ denotes the collection of all probability measures on $\R^2$ with marginals $m_1$ and $m_2$. 

Given the other players' strategy, player $i$ selects a control $\alpha^i \in \Htwo$ to minimize his/her expected total cost given by
$$ J^{i}(\alpha^{i} | (\alpha^{j})_{j\neq i}) := \mathbb{E}\l[ \int_{0}^{T} \frac{(\alpha_{t}^{i})^{2}}{2} dt + g(X^{i}_{T},m_{T})\r] $$
where $(\alpha^{j})_{j\neq i}$ denotes a strategy profile of other players excluding $i$, $g : \R \times \P(\R) \to \R$ is the terminal cost which we assume to be identical for all players, and $m_t$ is the empirical distribution of $(X^{i}_{t})_{1 \leq i \leq N}$, i.e. 
$$ m_t =  \frac{1}{N}\sum_{i=1}^{N}\delta_{X^{i}_{t}}(dx)  $$

Note that the strategies of the other players have an effect on the cost of player $i$ through $m_t$ and that is the main feature that makes this set up a game. We are seeking a type of equilibrium solution widely used in game theory settings called Nash equilibrium. 

\begin{definition} A set of strategies $(\alpha^{i})_{1 \leq i \leq N}$ is a \textit{Nash Equilibrium} if for every player $i$, $\alpha^{i}$ is optimal given the other players' strategies are $(\alpha^j)_{j \neq i}$. In other words, 
$$ J^i(\alpha^{i} |(\alpha^{j})_{j\neq i}) = \min_{\alpha \in \Htwo} J^i(\alpha | (\alpha^{j})_{j\neq i}),\quad \forall i \in \{1,2,\dots,N\}$$
\end{definition}

\del{Since the cost function of each player is identical, if the control problem for each player has a unique solution, then clearly the Nash equilibrium strategy must be symmetric. However, solving this problem}\add{Solving for a Nash equilibrium of an $N$-player game} is impractical when $N$ is large due to high dimensionality. So we formally take the limit as $N \to \infty$ and consider the limit\del{ing} problem, called a Mean Field Game (MFG), instead. Solving an MFG problem yields a control that can be used to approximate the exact Nash equilibrium for an $N$-player game. See \add{\cite{bensoussan2013,Carmona1,lacker2014general}} for a discussion and results on an approximate Nash equilibrium for $N$-player games. In this work, we will only show the well-posedness of an MFG problem in the presence of common noise. The approximation to $N$-player games will be treated in our future work.

 \subsection{MFG with Common Noise}\label{mfg_formulation} We now formulate a MFG problem in the presence of a common noise by formally taking a limit as $N \to \infty$. By considering the limiting problem and assuming that each player adopts the same strategy, one can represent the player distribution $m_t$ by a conditional law of a single \textit{representative} player given a common noise. In other words, we formulate the MFG with common noise as a stochastic control problem for a single (representative) agent with an equilibrium condition involving a conditional law of the state process given a common noise. 
 
 Let $T > 0$ be a fixed terminal time, $W=(W_t)_{0 \leq t \leq T}, \tW = (\tW_t)_{0 \leq t \leq T}$ be one dimensional independent Brownian motions defined on a complete filtered probability space $(\Omega,\F,\{ \F_{t} \}_{0\leq t \leq T},\mbb{P})$ satisfying the usual conditions. We assume that $\F_t$ is a natural filtration generated by $\xi_0, (W_s)_{0 \leq s \leq t}$, and $(\tW_s)_{0 \leq s \leq t}$, $\tF_t$ is generated by $(\tW_s)_{0 \leq s \leq T}$, both augmented by $\mb{P}$-null sets.  
 
 For a filtration $\G_t \subseteq \F_t$, we let $\Ltwo_{\G_t}$ denote the set of $\G_t$-measurable square integrable random variable and let $\H^2_\G([0,T];\R)$ denote the set of all $\G_t$-progressively-measurable process $\beta = (\beta_{t})_{0 \leq t \leq T}$ such that
$$ \mbb{E}\l[ \int_{0}^{T} \beta^{2}_{t} dt \r] < \infty$$
We define similarly the space $\H_\G^2([s,t];\R)$ for any $0 \leq s  <  t \leq T$ and we will often omit the subscript and write $\H^2([0,T];\R)$ for $\H^2_\F([0,T];\R)$. 
 
 Let $\xi_0 \in \Ltwo_{\F_0}$ be an initial state. For any control $\alpha \in \Htwo$, we denote by $X^\alpha = (X^\alpha_t)_{0 \leq t \leq T}$ the corresponding state process, i.e.
 $$ X^\alpha_t = \xi_0 + \int_0^t \alpha_t dt + \sigma W_t + \tsg \tW_t, $$
 and $m^\alpha_t = \Law(X^\alpha_t | \tF_t)$ for all $t \in [0,T]$ where $\Law(\cdot | \tF_t)$ denotes the conditional law given $\tF_t$.  It is easy to see that when $\alpha \in \Htwo$ and $\mb{E}[\xi_0^2] < \infty$, $m_t^\alpha$ has finite second moment a.s., i.e. $m^\alpha_t \in \Ptwo$ a.s. 
 
 \add{
 \begin{remark} An existence of a progressively-measurable version of $(\Law(X^\alpha_t | \tF_t))_{0 \leq t \leq T}$ is guaranteed by Lemma 1.1 in \cite{kurtz1988} for instance. In fact, in this case there exist a continuous version by the Kolmogorov continuity theorem. 
 \end{remark}}
 
 \textbf{Problem Definition.} A MFG with common noise is defined as follows: Find a control $\ha \in \Htwo$ such that, given $(m_t^{\ha})_{0 \leq t \leq T}$, $\ha$ is an optimal control for a stochastic control problem with state process
$$ dX_t = \alpha_t dt + \sigma dW_t + \tsg \tdW_t, \quad X_0 = \xi_0 $$
and cost
$$ J(\alpha | m^{\ha}) := \mb{E}\l[ \int_{0}^{T} \frac{\alpha_{t}^{2}}{2} dt + g(X_{T},m^{\ha}_{T})\r] $$
In other words, it satisfies
$$ J(\ha| m^{\ha}) \leq J(\alpha | m^{\ha}), \quad \forall \alpha \in \Htwo$$

We will often refer to the MFG problem described above as \textit{$\tsg$-MFG} to emphasize the existence of a common noise and call $\ha$ a \textit{solution to $\tsg$-MFG with initial $\xi_0$}. Note that $0$-MFG is simply the original MFG with independent Brownian motions. 

We would like to emphasize that in the control problem above, $m^{\ha}_t$ is exogenous and is not affected by a player's control. Thus, MFG is a standard control problem with an additional equilibrium condition. A type of problem where a player's control can affect the law is referred to as \textit{Mean Field Type Control Problem}. See \cite{bensoussan2013,carmona2013,carmona2013control} for a treatment on this different model and some discussion of the distinctions between the two problems. 
 
 Alternatively, we can view the $\tsg$-MFG problem as a fixed point problem as follows: Given a strategy $\alpha \in \Htwo$, then $m^{\alpha}_t$ is determined as defined above. With $m^{\alpha}_{t}$ given as a random flow of probability measures, we can solve a control problem for a representative player. This step yields a new optimal control $\tilde{\alpha}$. The following diagram summarizes the process
\begin{equation}\label{diagram}
	\ha = (\ha_{t})_{ 0 \leq t \leq T} \,\to\, m^{\ha} = (m^{\ha}_{t})_{0 \leq t \leq T} \,\to\, \tilde{\alpha} = (\tilde{\alpha}_{t})_{0 \leq t \leq T} 
\end{equation}
By the definition of the $\tsg$-MFG problem, $\ha$ is a $\tsg$-MFG solution if and only if it is a fixed point of this map.

Our main result in this paper is to show existence and uniqueness of a solution to $\tsg$-MFG, i.e. a fixed point of diagram (\ref{diagram}). We will define this map formally through FBSDE after discussing the Stochastic Maximum Principle in the next section.

\section{Stochastic Maximum Principle}\label{smp_fbsde}
Stochastic Maximum Principle (SMP for short) or Pontryagin Maximimum Principle is an approach to a control problem which studies optimality conditions satisfied by an optimal control. It gives sufficient and necessary conditions for the existence of an optimal control in terms of solvability of a Backward Stochastic Differential Equation (BSDE for short) of an \textit{adjoint} process. In this section, we apply SMP to the control problem in the MFG model. For more details about SMP, we refer to \cite{pham2009} or \cite{young1999}.

To apply sufficient conditions for the SMP, the convexity assumption on the cost functions is needed. Throughout the rest of the paper, we assume that $g: \R \times \Ptwo \to \R$ satisfies

\begin{assump}\label{a1} (Lipschitz in $x$) For each $m \in \Ptwo$, $g_x(x,m)$ exists and is Lipschitz continuous in $x$ i.e. there exist a constant $C_g$ such that
	$$ |g_x(x,m)-g_x(x',m)|  \leq C_g |x-x'| $$
	for any $x,x' \in \R$
\end{assump}
\begin{assump}\label{a2} (convexity) For any $x,x' \in \R, m \in \Ptwo$, 
	\begin{equation}\label{convexity}	
		(g_x(x,m)-g_x(x',m))(x-x') \geq 0
	\end{equation}
\end{assump}

We are ready to state the SMP for our control problem. Suppose we are given a random probability measure $m_T(\omega)$, then we have a standard control problem with a random terminal cost function given by $g(\cdot,m_T(\omega))$. The SMP for our model then reads

\begin{theorem}\label{SMP}
Suppose that there exist an adapted solution $(\hX_t,\hY_{t},\hZ_{t},\htZ_t)_{0 \leq t \leq T}$ to the FBSDE
\begin{equation}
	\label{fbsde}
	\begin{gathered}
 dX_{t} = -Y_{t} dt + \sigma dW_{t} + \tsg \tdW_{t} \\
 dY_{t}  = Z_{t} dW_{t} + \tZ_{t}\tdW_{t} \\
 X_{0} = \xi_{0}, \quad Y_{T} = g_{x}(X_{T},m_{T}) 
	\end{gathered}
\end{equation}
such that
$$ \mb{E}\l[ \sup_{0 \leq t \leq T} (\hX_t^2 + \hY_t^2) + \int_0^T \hZ^2_t + \htZ_t^2 dt \r] < \infty, $$
then $\ha_t = -\hY_t$ gives an optimal control to the control problem given $m_T$. Furthermore, for any $\beta \in \Htwo$, the following estimate holds
\begin{equation}\label{estJ}
   J(\ha | m) + \frac{1}{2}|\ha-\beta|^2 \leq J(\beta | m) 
   \end{equation}
Particularly, $\ha_t$ is the unique optimal control.
\end{theorem}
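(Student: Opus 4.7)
The plan is to establish the stronger estimate (\ref{estJ}) directly for an arbitrary $\beta \in \H^2([0,T];\R)$; this implies at once that $\ha = -\hY$ is optimal and that any other optimizer must coincide with $\ha$ in $L^2(d\mbb{P}\otimes dt)$. Two ingredients drive the argument. First, since (A2) says $g_x(\cdot,m)$ is non-decreasing in $x$, the function $g(\cdot,m_T)$ is convex for each $\omega$, hence satisfies the tangent-line inequality
$$ g(X^\beta_T,m_T) - g(\hX_T,m_T) \geq g_x(\hX_T,m_T)(X^\beta_T - \hX_T) = \hY_T(X^\beta_T - \hX_T), $$
where $X^\beta$ is the state driven by $\beta$ with the same initial condition $\xi_0$. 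Second, one applies It\^o's product formula to $\hY_t(X^\beta_t - \hX_t)$, exploiting that $X^\beta$ and $\hX$ share the same diffusion $(\sigma,\tsg)$ so that $d(X^\beta_t - \hX_t) = (\beta_t + \hY_t)\,dt$ has no martingale part and the cross-variation term drops out.

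Itô's formula then gives
$$ d\bigl(\hY_t(X^\beta_t - \hX_t)\bigr) = \hY_t(\beta_t + \hY_t)\,dt + (X^\beta_t - \hX_t)\bigl(\hZ_t\,dW_t + \htZ_t\,d\tW_t\bigr), $$
and taking expectation (see below for the martingale justification) yields $\mb{E}[\hY_T(X^\beta_T - \hX_T)] = \mb{E}\int_0^T \hY_t(\beta_t + \hY_t)\,dt$. Combining this with the tangent-line bound inside $J(\beta|m) - J(\ha|m)$ and substituting $\hY_t = -\ha_t$, the integrand simplifies via the elementary identity $\tfrac{1}{2}(\beta^2 - \ha^2) + (-\ha)(\beta - \ha) = \tfrac{1}{2}(\beta - \ha)^2$ to deliver
$$ J(\beta|m) - J(\ha|m) \geq \tfrac{1}{2}\,\mb{E}\!\int_0^T (\beta_t - \ha_t)^2\,dt, $$
which is exactly (\ref{estJ}) upon interpreting $|\ha - \beta|^2$ as the squared $L^2(d\mbb{P}\otimes dt)$ norm. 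Optimality of $\ha$ follows by taking $\beta$ arbitrary, and uniqueness follows by applying the same inequality to any putative second minimizer.

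The only place requiring care is showing that the two stochastic integrals vanish in expectation. I would localize with $\tau_n = \inf\{t : \int_0^t (X^\beta_s - \hX_s)^2(\hZ_s^2 + \htZ_s^2)\,ds \geq n\} \wedge T$, pass to expectation on $[0,\tau_n]$, and then send $n \to \infty$ via dominated convergence. The required integrability $\mb{E}\!\int_0^T (X^\beta_t - \hX_t)^2(\hZ_t^2 + \htZ_t^2)\,dt < \infty$ follows from the hypothesized bounds on $(\hX,\hY,\hZ,\htZ)$, Cauchy--Schwarz, and the fact that $\sup_{t \leq T}\mb{E}[(X^\beta_t - \hX_t)^2] < \infty$ (since $\beta,\ha \in \H^2$ and $X^\beta - \hX$ solves a noise-free equation pathwise). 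This localization is the only genuinely technical step; the rest is algebra, and the argument never uses more than (A1)--(A2) together with the standing integrability of the FBSDE solution.
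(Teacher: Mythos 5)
Your proposal is correct and is precisely the standard sufficiency argument of the Stochastic Maximum Principle (convexity of $g(\cdot,m_T)$ giving the tangent-line bound, It\^o's product rule on $\hY_t(X^\beta_t-\hX_t)$ with the cross-variation vanishing, and the exact identity $\tfrac{1}{2}(\beta^2-\ha^2)-\ha(\beta-\ha)=\tfrac{1}{2}(\beta-\ha)^2$ from the strictly convex quadratic running cost), which is exactly the argument the paper delegates to Theorem 6.4.6 of Pham and Theorem 2.2 of Carmona--Delarue rather than writing out. The localization step you describe is the right way to justify taking expectations, so there is nothing further to add.
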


\begin{proof} The proof is standard and we refer to Theorem 6.4.6 in \cite{pham2009}. The estimate \eqref{estJ} requires strict convexity in $\alpha$ of the running cost function. The proof can be found in Theorem 2.2. in \cite{Carmona1}.  
\end{proof}

Having stated the SMP, we now show that the FBSDE above is uniquely solvable, thereby proving that the control problem for a fixed $m_T$ is uniquely solvable. We state the result slightly more generally by allowing for a random terminal function and an arbitrary initial and terminal time as it will be applied again in a subsequent section. This result is an immediate consequence of Theorem 2.3 in \cite{peng1999} which gives the existence and uniqueness of a solution to a monotone FBSDE. 

\begin{theorem}\label{EUexample} Let $0 \leq s \leq \tau \leq T$ and $\xi \in \Ltwo_{\F_\tau}$. Suppose $v : \R \times \Omega \to \R$ is a $\F_\tau$-measurable Lipschitz continuous function satisfying a monotonicity condition
$$ (v(x,\omega)-v(x',\omega))(x-x') \geq 0 $$
for all $x,x' \in \R$ and $\omega \in \Omega$. Then there exist a unique adapted solution \del{$(X_{t},Y_{t},Z_{t},\tZ_{t})_{0 \leq t \leq T}$} \add{$(X_{t},Y_{t},Z_{t},\tZ_{t})_{s \leq t \leq \tau}$} to  
\begin{equation}
	\begin{gathered}
 dX_{t} = -Y_{t} dt + \sigma dW_{t} + \tsg \tdW_{t} \\
 dY_{t}  = Z_{t} dW_{t} + \tZ_{t}\tdW_{t} \\
 X_{s} = \xi, \quad Y_{\tau} = v(X_{\tau}) 
	\end{gathered}
\end{equation}
satisfying the following estimate
$$ \mb{E}\l[ \sup_{s \leq t \leq \tau} [ |X_t|^2 + |Y_t|^2 ] + \int_s^\tau [ |Z_t|^2 + |\tZ_t|^2 ] dt  \r] \leq C( \mb{E}[\xi^2] + \mb{E}[v^2(0)] + (\tau-s)(\sigma^2 + \tsg^2)) $$
for some constant $C$ depending on $T$ and the Lipschitz constant of $v$.

\end{theorem}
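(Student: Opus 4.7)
The plan is to invoke Theorem 2.3 of Peng and Wu \cite{peng1999} on monotone FBSDEs. After identifying our system in their general framework and checking the monotonicity and Lipschitz hypotheses, existence and uniqueness drop out, and the estimate is obtained by a direct It\^o--Gr\"onwall argument.

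First I would cast \eqref{fbsde} in general FBSDE form by identifying the forward drift $b(t,x,y) = -y$, the backward driver $f \equiv 0$, the constant diffusion coefficients $\sigma,\tsg$, and the terminal functional $\Phi(x,\omega) = v(x,\omega)$. Lipschitz continuity is immediate since the drift is linear in $y$ and $v$ is Lipschitz by assumption. For the monotonicity hypothesis, take two candidate solutions and write $\dX,\dY$ for their differences; the relevant inner product reduces to $(\Delta b)\,\dX + (\Delta f)\,\dY = -\dY\,\dX$, whose sign is compatible with the terminal monotonicity $(v(x,\omega)-v(x',\omega))(x-x')\geq 0$ assumed on $v$. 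This matches the monotone framework of \cite{peng1999}, modulo the harmless time-shift from $[0,T]$ to $[s,\tau]$ and the inclusion of the common-noise Brownian motion $\tW$, which enters linearly in the diffusion and so carries through their arguments unchanged. Existence and uniqueness of an adapted quadruple $(X_t,Y_t,Z_t,\tZ_t)_{s\leq t\leq \tau}$ with the stated integrability then follow.

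For the quantitative bound, I would apply It\^o's formula to $X_t Y_t$ on $[s,\tau]$, take expectations, and use the monotonicity of $v$ together with the linear growth $|v(x)| \leq |v(0)| + C_v|x|$ to control $\mb{E}[X_\tau v(X_\tau)]$ and absorb the $\mb{E}\int_s^\tau Y_t^2 \,dt$ term. Combined with It\^o's formula applied separately to $|X_t|^2$ and $|Y_t|^2$, the BDG inequality on the stochastic integrals against $W$ and $\tW$, and a Gr\"onwall argument on $\mb{E}[\sup_{s\leq t\leq \tau}|X_t|^2]$, this yields the asserted estimate with the explicit dependence on $\mb{E}[\xi^2]$, $\mb{E}[v(0)^2]$, and $(\tau-s)(\sigma^2+\tsg^2)$, while the constant $C$ absorbs $T$ and $C_v$.

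The main obstacle I expect is bookkeeping rather than any deep difficulty: the sign conventions must line up exactly in the Peng--Wu monotonicity check (the minus sign in the $-Y$ drift of $X$ is what makes the coupled pair monotone, so a naive rewriting would reverse the hypothesis), and the constants from the Lipschitz bound on $v$, the interval length $\tau-s$, and the two diffusion coefficients must be separated cleanly so that the right-hand side of the estimate takes exactly the stated additive form.
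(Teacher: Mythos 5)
Your proposal follows exactly the paper's route: the paper's entire proof is the single remark that the theorem ``is an immediate consequence of Theorem 2.3 in Peng--Wu,'' so citing that result and checking its hypotheses is precisely what is intended, and your It\^o/Gr\"onwall derivation of the estimate is the standard supplement. One small correction to your bookkeeping: the Peng--Wu monotonicity functional pairs the drift difference with $\Delta Y$ (not $\Delta X$), giving $\Delta b\cdot\Delta Y=-|\Delta Y|^2\leq 0$, which together with the terminal monotonicity of $v$ is the sign-definite quantity you need.
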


As a result of Theorem \ref{SMP} and Theorem \ref{EUexample}, we have the following corollary which shows that, for any given $m_T$, the control problem for a representative player is uniquely solvable, i.e. there exist a unique optimal control.

\begin{corollary}\label{SMPexample} Suppose \ref{a1}-\ref{a2} hold, then for any given $m_T$, the control problem described above has a unique optimal control given by $\ha_t = -Y_t$, where  $(X_{t},Y_{t},Z_{t},\tZ_{t})_{0 \leq t \leq T}$ is the solution to the FBSDE (\ref{fbsde}).
\end{corollary}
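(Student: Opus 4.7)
The plan is to directly combine the two theorems just established. The corollary is essentially a composition: Theorem \ref{EUexample} supplies the FBSDE solution, and Theorem \ref{SMP} converts that solution into a unique optimizer.

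First I would apply Theorem \ref{EUexample} with $s=0$, $\tau = T$, $\xi = \xi_0 \in \Ltwo_{\F_0} \subseteq \Ltwo_{\F_T}$, and terminal function $v(x,\omega) := g_x(x, m_T(\omega))$. I need to verify the three hypotheses on $v$: $\F_T$-measurability is immediate since $m_T$ is $\tF_T$-measurable and $\tF_T \subseteq \F_T$, and $g_x$ is jointly measurable in $(x,m)$; Lipschitz continuity in $x$ (with constant $C_g$ uniform in $\omega$) comes directly from assumption \ref{a1}; and the monotonicity of $v(\cdot,\omega)$ is precisely the content of assumption \ref{a2}, which gives $(g_x(x,m)-g_x(x',m))(x-x') \geq 0$ pointwise in $m$, hence pointwise in $\omega$. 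Theorem \ref{EUexample} then delivers a unique adapted square-integrable solution $(\hX_t,\hY_t,\hZ_t,\htZ_t)_{0\leq t \leq T}$ to the FBSDE \eqref{fbsde}, satisfying the integrability bound required by Theorem \ref{SMP}.

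Second, I would plug this solution into Theorem \ref{SMP}. Its sufficient-condition conclusion asserts that $\ha_t := -\hY_t$ is an optimal control for $J(\cdot \mid m)$, and moreover the strict-convexity estimate \eqref{estJ}, namely $J(\ha\mid m) + \frac{1}{2}|\ha - \beta|^2 \leq J(\beta \mid m)$ for every $\beta \in \Htwo$, forces uniqueness: any other optimizer $\beta$ would have $J(\beta\mid m) = J(\ha\mid m)$, whence $|\ha - \beta|^2 = 0$ and $\beta = \ha$ in $\Htwo$.

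There is no real obstacle here; the content of the corollary is purely a packaging statement, and the only thing to check carefully is that assumptions \ref{a1}--\ref{a2} on $g$ translate correctly into the Lipschitz/monotonicity hypotheses that Theorem \ref{EUexample} requires of the terminal map $v$, together with the observation that $\omega$-measurability of $m_T$ makes $v$ an $\F_T$-measurable random field.
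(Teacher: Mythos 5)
Your proposal is correct and matches the paper's reasoning exactly: the corollary is stated there as an immediate consequence of Theorem \ref{SMP} and Theorem \ref{EUexample} with no further argument, and your careful verification that \ref{a1}--\ref{a2} give the Lipschitz and monotonicity hypotheses on $v(x,\omega)=g_x(x,m_T(\omega))$ is precisely the (implicit) content of that deduction. Nothing is missing.
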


\begin{remark}
 In the context of Dynamic Programming Principle (DPP), which involves an HJB equation for the value function, the adjoint equation of the Stochastic Maximum Principle, under some regularity condition, is simply the backward SDE of the gradient of the value function. For more details about the connection between SMP and DPP, we refer to \del{the Appendix}\add{Theorem 6.4.7 in \cite{pham2009} for instance}.  
 \end{remark}

\section{Existence and Uniqueness of $\tsg$-MFG solution}\label{mainresult} In this section, we state and prove our main result which establishes an existence and uniqueness of a solution to Mean Field Games with a common noise.

In \cite{Carmona1}, Carmona and Delarue showed existence of $0$-MFG solution using SMP approach. They established the result under a convexity assumption and what they called a \textit{weak mean-reverting} assumption on the cost function. The latter states that for all $x \in \R$ and a constant $C > 0$.
\begin{equation}\label{weakMR}
 xg_x(0,\delta_x) \geq -C(1+|x|)
 \end{equation}
 where $\delta_x$ denote the Dirac measure at $x$. They first proved the result under a bounded derivative assumption on the  function, then relaxed this assumption using an approximation argument. 
 
 In \cite{cardaliaguet2010}, using PDE method, Lasry and Lions showed the same result under similar assumptions plus boundedness condition on the cost function. They gave existence of a classical solution to a coupled HJB-FP equation. The uniqueness was shown in both works \add{\cite{cardaliaguet2010,Carmona1}} under the following \textit{monotonicity} condition (in $m$) on the cost function, i.e
$$ \int (g(x,m_1)-g(x,m_2))d(m_1-m_2)(x) \geq 0 $$
for any $m_1,m_2 \in \Ptwo$. This condition can be expressed in terms of random variables as follows: For any \del{$\xi,\xi' \in \Ltwo$ (over an arbitrary probability space).} \add{square-integrable random variables $\xi$ and $\xi'$ defined on a common probability space,}
\begin{equation}\label{monLion}
 	 \mb{E}\l[ g(\xi',\Law(\xi')) +g(\xi,\Law(\xi)) - g(\xi,\Law(\xi')) - g(\xi',\Law(\xi)) \r] \geq 0
\end{equation}
where $\Law(\cdot)$ denote the law of a random variable. 

In the existence proof of both the Lasry and Lions PDE approach and the Carmona and Delarue probabilistic approach, they applied Schauder's theorem to establish an existence of a fixed point. However, this strategy cannot be extended simply in the presence of common noise as we no longer have a deterministic flow of probability measures given by the law of the optimal state process. Instead, we have to deal with a \textit{random} flow of probability measures from a \textit{conditional} law given a common noise. Working with this larger space, we cannot establish compactness which is necessary to apply the Schauder fixed point theorem in the same way. 

\add{In parallel to our work, Carmona et al. \cite{carmona2014commonnoise} overcome this issue by using discretization of the common noise path. Working with this finite-dimensional approximation, they were able to prove the existence of a solution to the discretized MFG and obtain the existence of the original MFG by refining the approximation and passing the limit. This construction gives the \textit{weak} limit and leads to a notion of a  \textit{weak MFG} solution.} 

\del{As a result, we will use} \add{In this work, however, we adopt an alternative approach}, namely the Banach fixed point theorem. In the same way as in the proof of wellposedness of FBSDE, the Banach fixed point theorem can be used to establish existence of a solution when the time duration $T$ is sufficiently small. This method can usually be applied in that case because the solution estimate depends on $T$,  and when $T$ is sufficiently small, we can get a contraction map easily. See \cite{Antonelli,Ma1999} for proofs of existence and uniqueness of a solution to FBSDE for a small time duration. However, the small time restriction is not a desirable assumption for obvious reasons, so we wish to extend the solution to arbitrary time duration. To do so, we need an extra condition on $g$ to be able to control the Lipschitz constant of a new terminal condition as we move backwards in time. So in addition to $\ref{a1},\ref{a2}$, we will need the following assumptions on $g$

\begin{assump}\label{a3}(Lipschitz in $m$) $g_x$ is Lipschitz continuous in $m$ uniformly in $x$, i.e. there exist a constant $C_g$ such that
	$$ |g_x(x,m)-g_x(x,m')|  \leq C_g W_2(m,m') $$
	for all $x \in \R, m,m' \in \Ptwo$, where $W_2(m,m')$ is the second order Wasserstein metric defined by (\ref{wass}). This is equivalent to the following; for any $x \in \R$, \del{$\xi,\xi'  \in \Ltwo$} \add{and any square-integrable random variables $\xi$ and $\xi'$ defined on a common probability space,}
	$$ |g_x(x,\Law(\xi))-g_x(x,\Law(\xi'))|  \leq C_g (\mb{E}|\xi-\xi'|^2)^{\frac{1}{2}} $$
\end{assump}
\begin{assump}\label{a4}(weak monotonicity in $m$) For any $m,m' \in \Ptwo$ and any $\gamma \in \P_2(\R^2)$ with marginals $m,m'$ respectively,
	$$ \int_{\R^2} \l[ (g_x(x,m)-g_x(y,m'))(x-y) \r] \gamma(dx,dy) \geq 0 $$
	Equivalently, for any \del{$\xi,\xi' \in \Ltwo$} \add{square-integrable random variables $\xi$ and $\xi'$ defined on a common probability space,} 
	\begin{equation}\label{monotone}
		\mb{E}[ g_x(\xi,\Law(\xi)) - g_x(\xi',\Law(\xi'))(\xi-\xi')] \geq 0
	\end{equation}
\end{assump}

\begin{remark}\label{remark_example} An interesting example of $g$ which satisfies \ref{a1}-\ref{a4} is a general quadratic cost where $g$ is of the form
\begin{equation}\label{LQexample}
 g(x,m) = Ax^2 + x \int \psi(y)dm(y) + F(m)
 \end{equation}
 where $\psi:\R \to \R$ and $F: \Ptwo \to \R$ are Lipschitz continuous functions with $\text{Lip}(\psi) \leq 2A $. This form of $g$ includes the following examples  
 \begin{gather}
 	g(x,m) = \l( x - \int y dm(y) \r)^2 \label{e1}\\
 	g(x,m) =  \int (x-y)^2 dm(y) \notag
\end{gather}
 which occur frequently in applications (see \cite{carmona2013mean,gueant2011} for instance). More generally, it includes the cost function in the Linear-Quadratic Mean Field Games \cite{bensoussan2014linear} where $g$ takes the form
 $$ g(x,m) = qx^2 + \bar{q}\l(x-s\int y m(y) \r)^2$$
 where $q,\bar{q},s$ are positive constants satisfying $q+\bar{q} \geq s\bar{q}$. 
\end{remark}

Our main result establishes, under \ref{a1}-\ref{a4}, the existence and uniqueness of a $\tsg$-MFG solution. In the existence proof, our assumptions are similar to those in \cite{Carmona1} except that we replace the weak mean-reverting condition (\ref{weakMR}) by the weak monotonicity condition (\ref{monotone}). When $\xi = x, \xi' = 0$ is deterministic, (\ref{monotone}) reads
$$ (g_x(x,\delta_x) - g_x(0,\delta_{0}))x \geq 0, \qquad \forall x \in \R$$
where $\delta_x$ denote the Dirac measure at $x$. This implies (\ref{weakMR}), so one can view (\ref{monotone}) as a stronger version of (\ref{weakMR}). For the uniqueness result, when $g$ is convex in $x$, the weak monotonicity condition (\ref{monotone}) is indeed a weaker version of the monotonicity condition (\ref{monLion}) which was used in \cite{cardaliaguet2010,Carmona1} for the uniqueness proof of a $0$-MFG solution. The lemma below shows this.

\begin{lemma}\label{weakerMon} Let $h: \R \times \Ptwo$ be a continuously differentiable function such that $h$ is convex and satisfies the monotonicity condition (\ref{monLion}) stated above, then $h$ satisfies the weak monotonicity condition (\ref{monotone}).
\end{lemma}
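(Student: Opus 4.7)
The plan is to sandwich the LL quantity with two convexity inequalities and chain them to obtain the weak monotonicity. Convexity of $h$ in its first argument gives tangent-line bounds on the differences $h(\xi,\Law(\xi))-h(\xi',\Law(\xi))$ and $h(\xi,\Law(\xi'))-h(\xi',\Law(\xi'))$, and (\ref{monLion}) says exactly that the $\Law(\xi)$-difference dominates the $\Law(\xi')$-difference. Putting these together produces the desired inequality with no analysis beyond the differential inequality for convex functions.

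More precisely, I would first record the two pointwise consequences of convexity of $h(\cdot,m)$: taking $y=\xi'$, $x=\xi$ with $m=\Law(\xi)$ gives
\begin{equation*}
h(\xi,\Law(\xi)) - h(\xi',\Law(\xi)) \;\leq\; h_x(\xi,\Law(\xi))(\xi-\xi'),
\end{equation*}
and taking $y=\xi$, $x=\xi'$ with $m=\Law(\xi')$ gives
\begin{equation*}
h(\xi,\Law(\xi')) - h(\xi',\Law(\xi')) \;\geq\; h_x(\xi',\Law(\xi'))(\xi-\xi').
\end{equation*}
Since $h_x$ is Lipschitz and $\xi,\xi'$ are square integrable, both sides are integrable, so I may take expectations.

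Next I apply the Lasry--Lions monotonicity assumption (\ref{monLion}) in the rearranged form
\begin{equation*}
\mb{E}\bigl[h(\xi,\Law(\xi)) - h(\xi',\Law(\xi))\bigr] \;\geq\; \mb{E}\bigl[h(\xi,\Law(\xi')) - h(\xi',\Law(\xi'))\bigr].
\end{equation*}
Chaining this with the two convexity bounds above yields
\begin{equation*}
\mb{E}\bigl[h_x(\xi,\Law(\xi))(\xi-\xi')\bigr] \;\geq\; \mb{E}\bigl[h_x(\xi',\Law(\xi'))(\xi-\xi')\bigr],
\end{equation*}
which is precisely the weak monotonicity condition (\ref{monotone}). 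There is no genuine obstacle here; the only thing to be careful about is verifying that the three inequalities concern exactly the right pairings of first argument and measure argument so that the chain closes, and that the integrability hypothesis on $\xi,\xi'$ together with the Lipschitz property of $h_x$ (needed for $h$ to be continuously differentiable and for the expectations to be finite) justifies moving the tangent-line inequalities under $\mb{E}$.
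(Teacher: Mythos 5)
Your proof is correct and is essentially the paper's own argument: both use the same two tangent-line inequalities from convexity (one at $\xi$ with measure $\Law(\xi)$, one at $\xi'$ with measure $\Law(\xi')$) combined with (\ref{monLion}); you present them as a chain while the paper sums them, but the content is identical.
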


\begin{proof} Suppose (\ref{monLion}) holds. Let $\xi,\xi' \in \Ltwo$, then by convexity in $x$ of $h$, we get
$$ h(\xi',\Law(\xi')) - h(\xi,\Law(\xi')) \leq h_x(\xi',\Law(\xi'))(\xi'-\xi) $$
and
$$ h(\xi,\Law(\xi)) - h(\xi',\Law(\xi)) \leq -h_x(\xi,\Law(\xi))(\xi'-\xi) $$
 Summing up, taking expectation, and using (\ref{monLion}) yields (\ref{monotone}).
 
\end{proof}


The converse of the lemma above does not hold.  Example (\ref{e1}) gives a cost function that is convex in $x$, satisfies (\ref{monotone}), but does not satisfy (\ref{monLion}). Thus, one see that our result gives a more general uniqueness result for $0$-MFG when $g$ is convex in $x$.

\subsection{Uniqueness}\label{uniqueness} We begin by showing uniqueness of a $\tsg$-MFG solution	

\begin{theorem}[Uniqueness] \label{main2} Suppose $\ref{a1}-\ref{a4}$ holds, then the $\tsg$-MFG with initial condition $\xi_0 \in \Ltwo_{\F_0}$ has at most one solution.
\end{theorem}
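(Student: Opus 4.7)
The plan is to suppose two solutions $\ha^1,\ha^2$ of the $\tsg$-MFG with the same initial $\xi_0$, consider the associated FBSDE systems coming from Theorem \ref{SMP}, and show that the difference $\Delta Y = Y^1 - Y^2$ vanishes (whence $\ha^1 = -Y^1 = -Y^2 = \ha^2$). By Theorem \ref{SMP}, each $\ha^i$ is characterized by a unique adapted quadruple $(X^i,Y^i,Z^i,\tZ^i)$ solving (\ref{fbsde}) with terminal condition $Y^i_T = g_x(X^i_T,m^i_T)$, where $m^i_t = \Law(X^i_t\mid \tF_t)$.

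The first step is the standard FBSDE duality computation. Writing $\Delta X, \Delta Y, \Delta Z, \Delta \tZ$ for the pairwise differences, we have $d\Delta X_t = -\Delta Y_t\,dt$ with $\Delta X_0 = 0$, and $d\Delta Y_t = \Delta Z_t\,dW_t + \Delta \tZ_t\,d\tW_t$. Applying It\^o's formula to $\Delta X_t\Delta Y_t$ and taking expectation, the martingale parts vanish (using the $\mathscr{H}^2$ integrability guaranteed by Theorem \ref{EUexample}), and we obtain
\begin{equation*}
\mb{E}[\Delta X_T\Delta Y_T] = -\mb{E}\!\left[\int_0^T (\Delta Y_t)^2\,dt\right] \leq 0.
\end{equation*}

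The second step is to show that assumption \ref{a4} forces the left-hand side to be nonnegative. Here
\begin{equation*}
\Delta X_T\Delta Y_T = (X^1_T - X^2_T)\bigl(g_x(X^1_T,m^1_T) - g_x(X^2_T,m^2_T)\bigr).
\end{equation*}
Since $m^1_T$ and $m^2_T$ are $\tF_T$-measurable, condition on $\tF_T$ and use a regular conditional distribution: conditional on $\tF_T$, $X^i_T$ has (regular conditional) law $m^i_T$, and therefore on this conditional probability space the pair $(X^1_T,X^2_T)$ satisfies the hypothesis of (\ref{monotone}) with $m^i_T = \Law(X^i_T\mid \tF_T)$. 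Applying \ref{a4} conditionally gives $\mb{E}[\Delta X_T\Delta Y_T\mid \tF_T] \geq 0$ a.s., and taking expectation yields $\mb{E}[\Delta X_T\Delta Y_T]\geq 0$.

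Combining the two inequalities forces $\mb{E}\!\int_0^T(\Delta Y_t)^2 dt = 0$, hence $Y^1 \equiv Y^2$ in $\Htwo$ and $\ha^1 = \ha^2$. The main obstacle I anticipate is making precise the conditional application of \ref{a4}: one needs to justify that the condition, which is stated for random variables on a common probability space with their (unconditional) laws, applies verbatim on the regular conditional probability space given $\tF_T$ with the conditional laws $m^i_T$ in place of the unconditional laws. This is standard but must be done carefully, e.g. by integrating the deterministic inequality of \ref{a4} (applied on $\R^2$ with any coupling) against the regular conditional joint distribution of $(X^1_T,X^2_T)$ given $\tF_T$.
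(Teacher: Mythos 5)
Your proposal is correct and follows essentially the same route as the paper: the It\^o duality computation on $\Delta X_t\Delta Y_t$ combined with the weak monotonicity assumption \ref{a4} applied conditionally on $\tF_T$. The technical point you flag at the end is handled exactly as you suggest, since \ref{a4} is stated in its first form for an arbitrary coupling $\gamma\in\P_2(\R^2)$ of $m$ and $m'$, so one simply takes $\gamma$ to be the regular conditional joint law of $(X^1_T,X^2_T)$ given $\tF_T$, whose marginals are $m^1_T$ and $m^2_T$.
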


\begin{proof}  Let $\ha^1,\ha^2 \in  \Htwo $ be solutions to $\tsg$-MFG with initial $\xi_0 \in \Ltwo_{\F_0}$, and let $m_t^i = \Law(X^{\ha^i}_t| \tF_t)$ for  $i=1,2$. \add{Recall that $(\tF_t)_{0 \leq t \leq T}$ is the filtration generated by the common noise $(\tW_t)_{0 \leq t \leq T}$.} By Theorem \ref{EUexample} and Corollary \ref{SMPexample}, we have that for $i \in \{1,2\}$,  $\ha^i= -Y^i$ where $(X^i_t,Y^i_t,Z^i_t,\tZ^i_t)_{0\leq t \leq T}$ is the solution to the FBSDE
\begin{gather*}
	 dX_t = -Y_tdt + \sigma dW_t + \tsg \tdW_t \\
	 dY_t =  Z_tdW_t + \tZ_t \tdW_t \\
	 X_0 = \xi_0, Y_T = g_x(X_T,m^i_T)
\end{gather*}
From the fact that $\ha^i$ is the solution to $\tsg$-MFG, we get that $m^i_t$ is precisely the law of $X^i_T$ conditional on $\tF_t$. That is, $(X^i_t,Y^i_t,Z^i_t,\tZ^i_t)_{0\leq t \leq T}$ satisfies, for $i=1,2$,
\begin{gather*}
	 dX^i_t = -Y^i_tdt + \sigma dW_t + \tsg \tdW_t \\
	 dY^i_t =  Z^i_tdW_t + \tZ^i_t \tdW_t \\
	 X^i_0 = \xi_0, Y^i_T = g_x(X^i_T, \Law(X^i_T | \tF_T))
\end{gather*}
Let $\Delta X_t = X^1_t-X^2_t$ and $\Delta Y_t = Y^1_t-Y^2_t$. Using \ito's lemma, we get
$$ \mb{E}[\Delta X_T\Delta Y_T] - \Delta X_0 \Delta Y_0 = - \mb{E}\l[ \int_0^T \Delta Y_t^2 dt \r] $$
Since $\Delta X_0 = 0$ and from (\ref{monotone}), it follows that
\begin{align*}
\mb{E}[\Delta X_T \Delta Y_T] &= \mb{E}[ (X_T^1-X_T^2)(g_x(X_T^1,\Law(X_T^1|\tF_T))-g_x(X_T^2,\Law(X_T^2|\tF_T))] \\
&= \mb{E} \l[ \mb{E}\l[ (X_T^1-X_T^2)(g_x(X_T^1,\Law(X_T^1|\tF_T))-g_x(X_T^2,\Law(X_T^2|\tF_T))\Big| \tF_T \r]\r]  \geq 0
\end{align*}
Thus,
$$   \mb{E}\l[ \int_0^T \Delta Y_t^2 dt \r]   = -  \mb{E}[\Delta X_T\Delta Y_T]  \leq 0 $$
so $\ha^1 = \ha^2$ in $\Htwo$.
 
\end{proof}

\subsection{Existence} Next, we state and prove the existence result for $\tsg$-MFG.

\begin{theorem}[Existence]\label{main1} Under \ref{a1}-\ref{a4}, there exist a solution to $\tsg$-MFG with initial $\xi_0 \in \Ltwo_{\F_0}$.
\end{theorem}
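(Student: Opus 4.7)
The plan is to construct a solution to the McKean-Vlasov FBSDE that arises from the stochastic maximum principle. By Corollary \ref{SMPexample}, finding a $\tsg$-MFG solution is equivalent to finding an adapted quadruple $(\hX,\hY,\hZ,\htZ)$ solving
\begin{gather*}
d\hX_t = -\hY_t\,dt + \sigma\,dW_t + \tsg\,\tdW_t,\quad \hX_0=\xi_0,\\
d\hY_t = \hZ_t\,dW_t + \htZ_t\,\tdW_t,\quad \hY_T = g_x\bigl(\hX_T,\Law(\hX_T\mid \tF_T)\bigr),
\end{gather*}
so I would introduce the fixed-point map $\Phi$ on the space $\mathscr{M}$ of $\tF_t$-adapted continuous flows $m=(m_t)_{0\le t\le T}$ in $\Ptwo$. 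Given such an $m$, Theorem \ref{EUexample} applied with $v(x,\omega)=g_x(x,m_T(\omega))$, whose $x$-Lipschitz constant is controlled by \ref{a1} and whose monotonicity in $x$ is given by \ref{a2}, produces a unique decoupled FBSDE solution $(X^m,Y^m,Z^m,\tZ^m)$; define $\Phi(m)_t := \Law(X^m_t\mid\tF_t)$. A $\tsg$-MFG solution corresponds exactly to a fixed point of $\Phi$, with control $\ha_t = -Y^m_t$.

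For the small-time existence, I would equip $\mathscr{M}$ with the metric $d(m,m')^2 = \E\bigl[\sup_{t\le T} W_2(m_t,m_t')^2\bigr]$ and use standard FBSDE stability estimates, together with the Lipschitz continuity \ref{a3} of $g_x$ in $m$, to prove $d(\Phi(m),\Phi(m')) \le K(T) d(m,m')$ with $K(T)\to 0$ as $T\to 0$. Since the target space is complete, Banach's fixed point theorem delivers a unique solution on a small interval $[0,\delta]$, with $\delta$ depending only on the Lipschitz constants of $g_x$, the coefficients $\sigma,\tsg$, and the $L^2$ bound on $\xi_0$.

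The main work, and the main obstacle, is the extension to arbitrary $T$. The strategy is a backward patching argument: partition $[0,T]$ into $N$ subintervals of length at most $\delta$, then solve the MFG on the last subinterval $[T-\delta,T]$, use its terminal-value map to obtain a ``decoupling field'' $u_{T-\delta}:\R\times\Omega\to\R$ that is $\tF_{T-\delta}$-measurable in $\omega$, and then solve a new MFG on $[T-2\delta,T-\delta]$ with terminal cost whose gradient is $u_{T-\delta}$. Iterating backwards gives a solution on $[0,T]$, provided that at each step the new gradient-terminal-cost verifies the hypotheses of \ref{a1}--\ref{a4} uniformly so that the same $\delta$ works at every stage. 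This is precisely where weak monotonicity \ref{a4} is essential: following the Peng--Wu estimates adapted to the conditional-law setting, an application of \ito's formula to $\Delta X_t \Delta Y_t$ (for solutions started from two different initial values and conditional laws) combined with \ref{a4} yields an a priori bound, independent of the subinterval, on the $x$-Lipschitz constant and on the Wasserstein-Lipschitz constant of $u_s$. The hard part is proving this bound cleanly; I would encapsulate it as a separate lemma (to be proved in Section \ref{lemma}) stating that any $\tsg$-MFG solution on $[s,T]$ with any initial distribution satisfies uniform estimates that make $u_s$ Lipschitz with a constant depending only on $C_g$ and $T$.

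Once such uniform Lipschitz estimates for the decoupling field are established, the iteration closes in finitely many steps, and concatenating the subinterval solutions produces the desired $\tsg$-MFG solution on $[0,T]$. Uniqueness at each patching step follows from Theorem \ref{main2}, and uniqueness of the concatenation from the uniqueness of FBSDE solutions on each subinterval.
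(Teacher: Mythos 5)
Your overall strategy is the same as the paper's: reformulate the MFG as a fixed-point problem via the Stochastic Maximum Principle, obtain a solution on a short terminal interval by the Banach fixed point theorem, and extend backward in time using a uniform Lipschitz bound on the decoupling field, extracted from the weak monotonicity condition \ref{a4} by applying \ito's formula to $\Delta X_t\Delta Y_t$ over the whole remaining interval. This is exactly the content of Lemmas \ref{phi2} and \ref{decoupling}; the paper phrases the continuation as an $\inf\Gamma$ contradiction argument and runs the contraction on the space of controls $\H^2([s,\tau];\R)$ rather than on a space of random measure flows, but these differences are inessential (the control-space formulation does spare you the completeness and measurability issues of your metric $\E[\sup_t W_2(m_t,m'_t)^2]$).

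Two points in your writeup would need repair. First, you declare the decoupling field as $u_{T-\delta}:\R\times\Omega\to\R$. A function of the individual state and $\omega$ alone cannot serve as terminal data for an MFG on the earlier subinterval, because the cost-to-go at time $T-\delta$ depends on the (unknown) population state there; the paper's $u$ is a map $\R\times\Ltwo_{\F_\tau}\times\Omega\to\R$, and it is precisely the Lipschitz estimate in the population argument, (\ref{bddum}), measured through the conditional second moment given $\tF_\tau$, that takes most of the work in Lemma \ref{decoupling}. Your parenthetical about ``two different initial values and conditional laws'' suggests you intend this, but as stated the iteration is not well posed. Second, you let the small-time horizon $\delta$ depend on the $L^2$ bound of $\xi_0$. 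For the patching to close, the contraction constant must be uniform in the initial condition --- this is the point the paper emphasizes in Lemmas \ref{phi3} and \ref{phi2}, where the step size depends only on $C_u$ --- since the initial conditions of the later subproblems are the random states $\hX_{T-k\delta}$, whose $L^2$ norms you would otherwise have to bound a priori before knowing how far you may step. (Such bounds are available via Lemma \ref{alpha_estimate}, which is in any case needed to verify that the concatenated control remains in $\Htwo$, a step your proposal omits.)
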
 

Before we delve into the  proof, let us first define a following map whose fixed point gives us a $\tsg$-MFG solution. This map is simply the rigorous definition of the diagram (\ref{diagram}) in Section \ref{model}  .

Let $0 \leq s < \tau \leq T$ and $v: \R \times \Ltwo_{\F_\tau} \times \Omega \to \R$ satisties, for $\mb{P}$-a.s.,
\begin{gather}
	 (v(x,\xi, \omega)-v(x',\xi,\omega))(x-x') \geq 0  \label{v1} \\
	 | v(x, \xi,\omega )-v(x',\xi',\omega) |^2  \leq C_v \l[ |x-x'|^2 + \mb{E}\l[(\xi-\xi')^2 \Big| \tF_\tau \r](\omega) \r]  \label{v2}
\end{gather}
for all $x,x' \in \R, \xi,\xi' \in \Ltwo_{\F_\tau}$.

We define a map $\Phi^{s,\tau,\eta,v} : \H^{2}([s,\tau];\R) \to \H^{2}([s,\tau];\R)$ as follows; given $\ha \in  \H^{2}([s,\tau];\R)$, we define $(\hX_t)_{s \leq t \leq \tau}$ to be the state process corresponding to $\ha$ with initial $\eta \in \Ltwo_{\F_s}$, i.e. for $t \in [s,\tau]$, 
\begin{equation}\label{stateProcess}
 \hX_{t} =  \eta + \int_s^t \ha_{t} dt + \sigma (W_t - W_s) + \tsg (\tW_t - \tW_s) 
 \end{equation}
We then solve the FBSDE
\begin{equation}
	\label{fbsde_phi}
	 \begin{gathered}
dX_{t} = -Y_{t}dt + \sigma dW_{t} + \tsg \tdW_{t}   \\
dY_{t} = Z_{t}dW_{t} +  \tZ_{t}\tdW_{t}\\
X_{s}=\eta,\quad Y_{\tau} = v(X_{\tau},\hX_{\tau}) 
	\end{gathered}
\end{equation}
and set
$$ \Phi^{s,\tau,\eta,v}(\ha) := -Y = (-Y_t)_{s \leq t \leq \tau}$$
By (\ref{v1}),(\ref{v2}), and Theorem \ref{EUexample}, FBSDE (\ref{fbsde_phi}) is uniquely solvable so the map $\Phi^{s,\tau,\eta,v}$ is indeed well-defined. Furthermore, the fixed point of $\Phi^{0,T,\xi_0,g_x}$ clearly gives $\tsg$-MFG solution with initial $\xi_0$. We are now ready to begin the proof of Theorem \ref{main1}

\begin{proof} We start by considering the map $\Phi^{t,T,\eta,g_x}$, which is well-defined by $\ref{a1}, \ref{a2}$, and Theorem \ref{EUexample}. The lemma below gives a solution over a small time duration.

\begin{lemma}\label{phi3} There exist $\gamma > 0$ depending only on $C_g$ such that for any non-negative $t \in [T-\gamma,T), \Phi^{t,T,\eta,g_x}$ satisfies the following: for any $\eta \in \Ltwo_{\F_t}$, there exist $\ha^{t,T,\eta} \in \H^{2}([t,T];\R)$ such that
$$ \Phi^{t,T,\eta,g_x}(\ha^{t,T,\eta}) = \ha^{t,T,\eta} $$
\end{lemma}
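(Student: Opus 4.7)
\textbf{Proof plan for Lemma \ref{phi3}.} The plan is to apply the Banach fixed point theorem to $\Phi^{t,T,\eta,g_x}$ on $\H^{2}([t,T];\R)$ for $T-t$ sufficiently small. First I would verify that the map is well-defined: setting $v(x,\xi,\omega) := g_x(x,\Law(\xi|\tF_T)(\omega))$, condition \eqref{v1} is immediate from \ref{a2}, while \eqref{v2} follows from \ref{a1} and \ref{a3} together with the elementary inequality $W_2(\Law(\xi|\tF_T),\Law(\xi'|\tF_T))^2 \leq \mb{E}[(\xi-\xi')^2 \mid \tF_T]$. Thus Theorem \ref{EUexample} applies and $\Phi^{t,T,\eta,g_x}$ is well-defined on $\H^2([t,T];\R)$.

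Next, given $\alpha^1,\alpha^2 \in \H^2([t,T];\R)$ with associated state processes $\hX^1,\hX^2$ from \eqref{stateProcess} and FBSDE solutions $(X^i,Y^i,Z^i,\tZ^i)$ solving \eqref{fbsde_phi} with terminal $Y^i_T = g_x(X^i_T,\Law(\hX^i_T|\tF_T))$, I set $\delta X = X^1 - X^2$, $\delta Y = Y^1 - Y^2$, and similarly for $\delta Z, \delta \tZ$. Since both FBSDEs share the same Brownian coefficients and initial datum $\eta$, the Brownian terms cancel, giving $\delta X_s = -\int_t^s \delta Y_r\, dr$. Applying It\^o's formula to $\delta X_s \delta Y_s$ between $t$ and $T$ and taking expectation yields
\begin{equation*}
\mb{E}\!\left[ \int_t^T \delta Y_s^2\, ds \right] = -\mb{E}[\delta X_T \delta Y_T].
\end{equation*}

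I would then split the terminal increment as $\delta Y_T = [g_x(X^1_T,m^1_T) - g_x(X^2_T,m^1_T)] + [g_x(X^2_T,m^1_T) - g_x(X^2_T,m^2_T)]$, where $m^i_T := \Law(\hX^i_T|\tF_T)$. The convexity assumption \ref{a2} forces $\mb{E}[\delta X_T(g_x(X^1_T,m^1_T) - g_x(X^2_T,m^1_T))] \geq 0$, so this term can be dropped when upper-bounding $-\mb{E}[\delta X_T \delta Y_T]$. For the remaining piece, Cauchy--Schwarz together with \ref{a3} and the tower property bound it by $C_g (\mb{E}[\delta X_T^2])^{1/2}(\mb{E}[(\hX^1_T - \hX^2_T)^2])^{1/2}$. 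Using $\delta X_T = -\int_t^T \delta Y_s\, ds$ and $\hX^1_T - \hX^2_T = \int_t^T (\alpha^1_s - \alpha^2_s)\, ds$, two applications of Cauchy--Schwarz in time give
\begin{equation*}
\mb{E}\!\left[\int_t^T \delta Y_s^2\, ds\right] \leq C_g^2 (T-t)^2\, \mb{E}\!\left[\int_t^T |\alpha^1_s - \alpha^2_s|^2\, ds\right].
\end{equation*}

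Hence $\Phi^{t,T,\eta,g_x}$ is a strict contraction on $\H^2([t,T];\R)$ provided $T - t < 1/C_g$, and picking $\gamma := 1/(2C_g)$ (which depends only on $C_g$) yields the claimed fixed point via the Banach fixed point theorem. The only subtlety worth watching is the handling of the conditional law in the terminal condition: one must be careful that \ref{a3} delivers a bound in terms of $W_2(m^1_T,m^2_T)$ which in turn dominates $\mb{E}[(\hX^1_T - \hX^2_T)^2 | \tF_T]$ pathwise, so that no extra factor arises after taking expectations. The rest is a clean consequence of the forward--backward duality via It\^o and the smallness of $T-t$.
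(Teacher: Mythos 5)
Your argument is correct and, in its essentials, produces the same conclusion as the paper (a contraction of $\Phi^{t,T,\eta,g_x}$ on $\H^{2}([t,T];\R)$, uniform in $\eta$, for $T-t$ small depending only on $C_g$), but the contraction estimate itself is derived by a genuinely different route. The paper obtains Lemma \ref{phi3} as a special case of Lemma \ref{phi2}, whose proof never uses convexity in the contraction step: it bounds $\mb{E}[\Delta Y_\tau^2]$ directly by the full Lipschitz property \eqref{v2} of the terminal map, uses the martingale property of $\Delta Y$ to get $\mb{E}[\Delta X_\tau^2]\le(\tau-s)^2\,\mb{E}[\Delta Y_\tau^2]$, and then absorbs the resulting $C_u(\tau-s)^2\,\mb{E}[\Delta Y_\tau^2]$ term into the left-hand side; monotonicity in $x$ enters only through Theorem \ref{EUexample} to make $\Phi$ well defined. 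You instead exploit the forward--backward duality $\mb{E}\bigl[\int_t^T\delta Y_s^2\,ds\bigr]=-\mb{E}[\delta X_T\delta Y_T]$, discard the $x$-increment of $g_x$ by convexity \ref{a2}, and control only the measure increment via \ref{a3} and Cauchy--Schwarz. Your route buys a slightly cleaner threshold ($C_g(T-t)<1$, with $C_g$ only the Lipschitz-in-$m$ constant) and mirrors the duality computation the paper itself uses in Lemma \ref{decoupling} and Theorem \ref{main2}; the paper's route is marginally more robust in that it yields a contraction for any terminal map that is merely Lipschitz in the sense of \eqref{v2}, with no monotonicity in $x$ needed beyond solvability of the FBSDE, which is convenient since the same lemma is reused with the decoupling field $u$ in place of $g_x$. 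One wording slip to fix: you say the $W_2$ bound ``dominates'' $\mb{E}[(\hX^1_T-\hX^2_T)^2\mid\tF_T]$, whereas the inequality you actually need (and correctly state earlier) goes the other way, namely $W_2(m^1_T,m^2_T)^2\le\mb{E}[(\hX^1_T-\hX^2_T)^2\mid\tF_T]$, because the conditional joint law of $(\hX^1_T,\hX^2_T)$ given $\tF_T$ is a coupling of the two conditional marginals.
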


This lemma is a special case of Lemma \ref{phi2} which is stated below. The key point of this lemma is the fact that the small time duration $\gamma$ is independent of $\eta$. It requires us to prove that $\Phi^{t,T,\eta,g_x}$ is a contraction map uniformly in $\eta$ when $T-t$ is sufficiently small.

Let $\Gamma$ be the set of $t \in [0,T]$ such that the following statement holds; for any $\eta \in \Ltwo_{\F_t}$, there exist $\ha^{t,T,\eta} \in \H^{2}([t,T];\R)$ satisfying
$$  \Phi^{t,T,\eta,g_x}(\ha^{t,T,\eta}) = \ha^{t,T,\eta} $$
By Lemma \ref{phi3} above, $T-\gamma \in \Gamma$. If $0 \in \Gamma$, then we have completed the proof. Suppose not, let $t_0 = \inf \Gamma$ ($t_0$ can still be zero),  $\gamma_0 > 0$ be sufficiently small so that $t_0+\gamma_0 < T-\frac{\gamma}{2} $, and let $\tau \in [t_0,t_0+\gamma_0) \cap \Gamma$. From the fact that $\tau \in \Gamma$, for any initial $\eta \in \Ltwo_{\F_\tau}$, there exist a fixed point $(\ha^{\tau,\eta}_t)_{\tau \leq t \leq T} \in \H^2([\tau,T];\R)$ of the map $\Phi^{\tau,T,\eta,g_x}$. Let $(\hX^{\tau,\eta}_t)_{\tau \leq t \leq T}$ denote the corresponding state process, i.e. for $t \in [\tau,T]$,
\begin{equation}\label{refProcess}
	 \hX_t^{\tau,\eta} = \eta + \int_\tau^t\ha^{\tau,\eta}_t dt + \sigma(W_t - W_\tau) + \tsg (\tW_t - \tW_\tau)
\end{equation}
Consider the following FBSDE
\begin{gather}
dX_{t} = -Y_{t}dt + \sigma dW_{t} + \tsg \tdW_{t}   \nonumber \\
dY_{t} = Z_{t}dW_{t} +  \tZ_{t}\tdW_{t} \label{fbsde1}\\
X_{\tau}=x,\quad Y_{T} = g_{x}(X_{T},\Law(\hX^{\tau,\eta}_T|\tF_T)) \nonumber
\end{gather}
By Theorem \ref{EUexample}, the FBSDE (\ref{fbsde1}) above is uniquely solvable and we denote the solution by $(X_t^{\tau,x,\eta},Y_t^{\tau,x,\eta},Z_t^{\tau,x,\eta},\tZ^{\tau,x,\eta}_t)_{\tau \leq t \leq T}$. Let $u: \R \times \Ltwo_{\F_\tau} \times \Omega \to \R$ be defined as $u(x,\eta,\omega) = Y_\tau^{\tau,x,\eta}(\omega)$, then we have the following estimates whose proofs are given in the next section.

\begin{lemma}\label{decoupling} $u$ defined above satisfies, \del{for} $\mb{P}$-a.s.,
\begin{gather}
	 (u(x,\xi, \omega)-u(x',\xi,\omega))(x-x') \geq 0  \label{bddux} \\
	 | u(x, \xi,\omega )-u(x',\xi',\omega) |^2  \leq C_u \l[ |x-x'|^2 + \mb{E}\l[(\xi-\xi')^2 \Big| \tF_\tau \r](\omega) \r]  \label{bddum}
\end{gather}
for all $x,x' \in \R, \xi,\xi' \in \Ltwo_{\F_\tau}$ with the constant $C_u$ depending only on $T, C_g, \gamma$.
\end{lemma}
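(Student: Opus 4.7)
The plan is to split the lemma into two parts. Inequality \eqref{bddux} (monotonicity in $x$ for a common reference $\eta$) can be obtained by a direct Itô argument exploiting assumption \ref{a2}, while the Lipschitz estimate \eqref{bddum} requires combining the FBSDE stability from Theorem \ref{EUexample} with a Lipschitz-in-initial-condition estimate for the fixed point $\hat X^{\tau,\eta}$ that is implicit in Lemma \ref{phi3}. Throughout, for $i=1,2$, I denote by $(X^{i}_{t},Y^{i}_{t},Z^{i}_{t},\tZ^{i}_{t})_{\tau\leq t\leq T}$ the solution of \eqref{fbsde1} started at $x^{i}$ with reference $\xi^{i}$, write $\mu^{i}=\Law(\hX^{\tau,\xi^{i}}_{T}|\tF_{T})$, and use $\Delta X,\Delta Y,\Delta Z,\dtZ$ for the corresponding differences. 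Observe that $\Delta X$ satisfies $d\Delta X_{t}=-\Delta Y_{t}\,dt$ (the Brownian terms cancel), while $d\Delta Y_{t}=\Delta Z_{t}dW_{t}+\dtZ_{t}\tdW_{t}$.

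For \eqref{bddux}, I would take $\xi^{1}=\xi^{2}=\xi$, so that $\mu^{1}=\mu^{2}=\mu$ and $\Delta Y_{T}=g_{x}(X^{1}_{T},\mu)-g_{x}(X^{2}_{T},\mu)$. Applying Itô's formula to $\Delta X_{t}\Delta Y_{t}$ on $[\tau,T]$ and taking $\E[\,\cdot\,|\F_{\tau}]$ kills the stochastic integrals and yields
\[
(x-x')\bigl(u(x,\xi)-u(x',\xi)\bigr)=\E\bigl[\Delta X_{T}\Delta Y_{T}\bigm|\F_{\tau}\bigr]+\E\!\left[\int_{\tau}^{T}|\Delta Y_{t}|^{2}\,dt\,\Bigm|\F_{\tau}\right].
\]
By the convexity assumption \ref{a2}, $\Delta X_{T}\Delta Y_{T}\ge 0$ pointwise, so the right-hand side is $\ge 0$ $\mb P$-a.s., which is \eqref{bddux}.

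For \eqref{bddum}, I would decompose the terminal condition of the difference FBSDE as $\Delta Y_{T}=\Psi(\Delta X_{T})+G$, where $\Psi(y,\omega):=g_{x}(X^{2}_{T}+y,\mu^{1})-g_{x}(X^{2}_{T},\mu^{1})$ is $\F_{T}$-measurable, Lipschitz and monotone in $y$ (uniform in $\omega$) thanks to \ref{a1} and \ref{a2}, while $G=g_{x}(X^{2}_{T},\mu^{1})-g_{x}(X^{2}_{T},\mu^{2})$ plays the role of a (random) shift with $|G|\le C_{g}W_{2}(\mu^{1},\mu^{2})$ by \ref{a3}. Applying Theorem \ref{EUexample} conditionally on $\F_{\tau}$ to this difference FBSDE, whose forward part has zero diffusion and initial datum $x-x'$, gives
\[
|u(x,\xi)-u(x',\xi')|^{2}=|\Delta Y_{\tau}|^{2}\le C\bigl(|x-x'|^{2}+\E[G^{2}|\F_{\tau}]\bigr),
\]
with $C$ depending only on $T$ and $C_{g}$. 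To finish, I bound $W_{2}^{2}(\mu^{1},\mu^{2})$ by the conditional $L^{2}$-distance of the coupling $(\hX^{\tau,\xi^{1}}_{T},\hX^{\tau,\xi^{2}}_{T})$ given $\tF_{T}$, and then use a Lipschitz-stability estimate for the fixed point $\hX^{\tau,\cdot}_{T}$ (conditional on $\tF_{T}$) to obtain $\E[|\hX^{\tau,\xi^{1}}_{T}-\hX^{\tau,\xi^{2}}_{T}|^{2}\,|\,\tF_{T}]\le C\E[|\xi^{1}-\xi^{2}|^{2}\,|\,\tF_{T}]$. Since $\xi^{1}-\xi^{2}$ is $\F_{\tau}$-measurable and the increment $\tW_{\cdot}-\tW_{\tau}$ on $(\tau,T]$ is independent of $\F_{\tau}$, a standard conditional-independence calculation gives $\E[|\xi^{1}-\xi^{2}|^{2}\,|\,\tF_{T}]=\E[|\xi^{1}-\xi^{2}|^{2}\,|\,\tF_{\tau}]$. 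Taking $\E[\,\cdot\,|\F_{\tau}]$ and using $\tF_{\tau}\subset\F_{\tau}$ to pull the $\tF_{\tau}$-measurable bound out of the conditional expectation then yields $\E[G^{2}|\F_{\tau}]\le C\,\E[|\xi-\xi'|^{2}|\tF_{\tau}]$, which is \eqref{bddum}. The main technical obstacle is establishing the conditional Lipschitz-stability of the fixed point $\hX^{\tau,\xi}_{T}$ in $\xi$; I expect to extract this from the Banach contraction argument behind Lemma \ref{phi3} by noting that the contraction constant and the Lipschitz dependence of $\Phi^{\tau,T,\cdot,g_{x}}$ on the initial datum are uniform across $\omega$, so the usual parametric-fixed-point inequality $(1-q)\|\hat\a^{1}-\hat\a^{2}\|\le L\|\xi^{1}-\xi^{2}\|$ upgrades to its $\tF_{T}$-conditional analogue.
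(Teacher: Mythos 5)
Your proof of \eqref{bddux} is exactly the paper's: apply \ito's lemma to $\Delta X_t\Delta Y_t$ on $[\tau,T]$, condition on $\F_\tau$, and use the convexity assumption \ref{a2} to conclude $\Delta X_\tau\Delta Y_\tau\ge 0$. For \eqref{bddum}, your decomposition of the terminal difference into a monotone-Lipschitz part in $\Delta X_T$ and a measure-shift $G$ controlled by \ref{a3} is also in the spirit of the paper's computation (the paper runs the duality estimate by hand, carrying $\tmb{E}_\tau[\cdot]$ throughout, rather than invoking Theorem \ref{EUexample}; note that the theorem as stated is an unconditional $L^2$ bound, so you would still need to justify its conditional version before reading off a pointwise bound on $|\Delta Y_\tau|^2$).

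The genuine gap is the step you yourself flag as the main obstacle: the conditional Lipschitz stability $\E[|\hX^{\tau,\xi^1}_T-\hX^{\tau,\xi^2}_T|^2\,|\,\tF_T]\le C\,\E[|\xi^1-\xi^2|^2\,|\,\tF_T]$. You propose to extract it from the Banach contraction behind Lemma \ref{phi3}, but that contraction is only available on intervals of length at most $\gamma$ (resp.\ $\gamma'$), whereas here $T-\tau$ can be anything in $(\gamma/2,T]$: the fixed point of $\Phi^{\tau,T,\eta,g_x}$ exists only via the patching/induction argument, and $\Phi^{\tau,T,\eta,g_x}$ itself need not be a contraction. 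Propagating a parametric-fixed-point inequality through the patches is moreover circular, since the admissible patch length $\gamma'$ depends on the very constant $C_u$ you are trying to establish. The paper closes this gap differently: it writes down the McKean--Vlasov FBSDE satisfied by $(\hX^{\tau,\eta^i},\hY^{\tau,\eta^i})$ on all of $[\tau,T]$, applies \ito's lemma to $\Delta\hX_t\Delta\hY_t$, uses the weak monotonicity assumption \ref{a4} to discard the terminal pairing, and absorbs the remaining term $\tmb{E}_\tau[\Delta\eta\,\Delta\hY_\tau]$ via Young's inequality, the martingale property of $\Delta\hY$, and the lower bound $T-\tau\ge\gamma/2$. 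That last step is precisely where the $\gamma$-dependence of $C_u$ comes from, a dependence your argument does not produce; and the fact that your proof of \eqref{bddum} never invokes \ref{a4} should itself be a warning sign, since this stability estimate is exactly where the weak monotonicity condition earns its keep.
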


Next, we attempt to extend the solution further. Let $s \in [0,\tau)$ to be determined, $\eta \in \Ltwo_{\F_s}$ and consider the map $\Phi^{s,\tau,\eta,u}$ as defined above. We have the following lemma which enables us to extend the solution beyond $t_0$ creating a contradiction.

\begin{lemma}\label{phi2} Suppose $u: \R \times \Ltwo_{\F_\tau} \times \Omega \to \R$ satisfies (\ref{bddux}) and (\ref{bddum}), then there exist $\gamma' > 0$ depending only on $C_u$ such that for any non-negative $s \in [\tau-\gamma',\tau)$, the following holds: For any $\eta \in \Ltwo_{\F_s}$, there exist $\ha^{s,\tau,\eta} \in \H^{2}([s,\tau];\R)$ satisfying
$$ \Phi^{s,\tau,\eta,u}(\ha^{s,\tau,\eta}) = \ha^{s,\tau,\eta} $$
\end{lemma}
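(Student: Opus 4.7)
The plan is to apply the Banach fixed point theorem on the Banach space $\H^{2}([s,\tau];\R)$, showing that $\Phi^{s,\tau,\eta,u}$ is a strict contraction whenever $\tau-s$ is small enough in a way that depends only on $C_u$. The map is well-defined for any $\eta \in \Ltwo_{\F_s}$ and any $\alpha$: for the reference process $\hX$ defined from $\alpha$ via (\ref{stateProcess}), conditions (\ref{bddux})--(\ref{bddum}) imply that $x \mapsto u(x,\hX_\tau)$ is $\F_\tau$-measurable, Lipschitz in $x$ (take $\xi=\xi'$), and monotone in $x$, so Theorem \ref{EUexample} produces a unique solution to the FBSDE (\ref{fbsde_phi}) defining $\Phi^{s,\tau,\eta,u}(\alpha) = -Y$.

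Given two controls $\alpha^1,\alpha^2 \in \H^{2}([s,\tau];\R)$, let $\hX^1,\hX^2$ and $(X^i,Y^i,Z^i,\tZ^i)$ be the corresponding reference processes and FBSDE solutions, and set $\Delta X = X^1-X^2$, $\Delta Y = Y^1-Y^2$, $\Delta \hX = \hX^1-\hX^2$, $\Delta \alpha = \alpha^1-\alpha^2$. Since $X^1_s = X^2_s = \eta$ and the Brownian parts cancel, $\Delta X_s = 0$ and $d\Delta X_t = -\Delta Y_t\,dt$. The key step is to apply Itô's formula to $\Delta X_t \Delta Y_t$: since $\Delta X$ has no martingale part, the cross-variation vanishes and integrating from $s$ to $\tau$ and taking expectations gives
\[ \E[\Delta X_\tau \Delta Y_\tau] = -\E\!\left[\int_s^\tau |\Delta Y_t|^2\,dt\right]. \]
Decompose $\Delta Y_\tau = [u(X^1_\tau,\hX^1_\tau) - u(X^2_\tau,\hX^1_\tau)] + [u(X^2_\tau,\hX^1_\tau) - u(X^2_\tau,\hX^2_\tau)]$. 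Multiplied by $\Delta X_\tau$, the first bracket is pointwise nonnegative by (\ref{bddux}) and can be dropped; for the second, (\ref{bddum}) at $x=x'=X^2_\tau$, together with the tower property and Cauchy--Schwarz, yields
\[ \bigl|\E[\Delta X_\tau(u(X^2_\tau,\hX^1_\tau) - u(X^2_\tau,\hX^2_\tau))]\bigr| \leq \sqrt{C_u}\,\E[|\Delta X_\tau|^2]^{1/2}\,\E[|\Delta \hX_\tau|^2]^{1/2}. \]

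Combining these with the Cauchy--Schwarz bounds $\E|\Delta X_\tau|^2 \leq (\tau-s)\|\Delta Y\|_{\H^{2}[s,\tau]}^2$ (from $\Delta X_\tau = -\int_s^\tau \Delta Y_t\,dt$) and $\E|\Delta \hX_\tau|^2 \leq (\tau-s)\|\Delta\alpha\|_{\H^{2}[s,\tau]}^2$ (from $\Delta \hX_\tau = \int_s^\tau \Delta\alpha_r\,dr$) yields
\[ \|\Phi^{s,\tau,\eta,u}(\alpha^1) - \Phi^{s,\tau,\eta,u}(\alpha^2)\|_{\H^{2}[s,\tau]}^2 \leq (\tau-s)^2 C_u\,\|\alpha^1 - \alpha^2\|_{\H^{2}[s,\tau]}^2, \]
so any $\gamma' < 1/\sqrt{C_u}$ gives a contraction and Banach's theorem delivers the fixed point $\ha^{s,\tau,\eta}$. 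The delicate point, and the reason the lemma is stated this way, is that the contraction constant depends only on $C_u$, independently of $\eta$ and the particular choice of $\tau$; this uniformity is precisely what allows the backward-in-time iteration in the proof of Theorem \ref{main1} to progress by fixed steps of length $\gamma'$ and eventually reach $t=0$, giving the desired extension across the arbitrary horizon $[0,T]$.
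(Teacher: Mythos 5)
Your proof is correct, and its key estimate takes a genuinely different route from the paper's. The paper never uses the monotonicity (\ref{bddux}) in the contraction step: it bounds $\E[\Delta Y_\tau^2]$ directly by squaring the terminal condition and applying the full Lipschitz estimate (\ref{bddum}) in both arguments, controls $\E[\Delta X_\tau^2]$ by $(\tau-s)^2\E[\Delta Y_\tau^2]$ via the martingale property of $\Delta Y$, and then absorbs the $\E[\Delta Y_\tau^2]$ term on the left for $C_u(\tau-s)^2<1$, arriving at the contraction constant $\frac{C_u(\tau-s)^2}{1-C_u(\tau-s)^2}$. You instead run the duality identity $\E[\Delta X_\tau\Delta Y_\tau]=-\E\bigl[\int_s^\tau \Delta Y_t^2\,dt\bigr]$ from It\^o's product formula, use (\ref{bddux}) to discard the increment of $u$ in its first argument, and only invoke the Lipschitz property in the second (random-variable) argument, which yields the slightly cleaner constant $C_u(\tau-s)^2$ with no absorption step. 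Both arguments deliver exactly what the lemma needs --- a threshold $\gamma'$ depending only on $C_u$, uniformly in $\eta$ and $\tau$ --- and you correctly flag that this uniformity is the point of the lemma. Your version is arguably more in the spirit of the monotone-FBSDE continuation method and demands less of $u$ at the contraction stage (monotonicity in $x$ plus Lipschitz continuity in $\xi$ only), though the Lipschitz dependence on $x$ is still needed, as you note, to make the map well defined through Theorem \ref{EUexample}. One cosmetic point: the displayed inequality $\E[\Delta X_\tau\Delta Y_\tau]=-\E\bigl[\int_s^\tau|\Delta Y_t|^2\,dt\bigr]$ implicitly uses that $\int \Delta X\, d\Delta Y$ is a true martingale; this follows from the square-integrability guaranteed by Theorem \ref{EUexample} and is the same tacit step the paper takes, so it is not a gap.
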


Note that this lemma implies Lemma \ref{phi3} above by setting $u(x,\xi) = g_x(x,\Law(\xi|\tF_T))$. Assumption \ref{a1}-\ref{a4} implies that $g_x$ satisfies (\ref{bddux}) and (\ref{bddum}). The proof of this lemma is given in the next section. Next, we let $\gamma'$ be a constant from Lemma \ref{phi2} and $s$ be a non-negative element in $[\tau-\gamma',\tau)$. We construct a following control $\ha^{s,T,\eta}$ by letting
$$ \ha^{s,T,\eta} = \begin{cases} \ha^{s,\tau,\eta}, &\text{if }s\leq t < \tau \\  \ha^{\tau,T,\hX_\tau}, &\text{if }\tau \leq t \leq T \end{cases} $$
By definition of $u$, it follows that $ \ha^{s,T,\eta}$ satisfies
$$ \Phi^{s,T,\eta,g_x}(\ha^{s,T,\eta}) = \ha^{s,T,\eta} $$
We are left to check that $\ha^{s,T,\eta}$ is in $\H^{2}([s,T];\R)$. To do so, we need the following estimate
\begin{lemma}\label{alpha_estimate}
Suppose $(\hX_t,\hY_t,\hZ_t,\htZ_t)_{\tau \leq t \leq T} \in (\H^2([\tau,T];\R))^4$ satisfies the FBSDE
\begin{gather*}
	d\hX_{t} = -\hY_{t}dt + \sg dW_{t} +\tsg \tdW_t					 \\
	d\hY_{t} = \hZ_{t}dW_{t} + \htZ_{t}\tdW_{t}			 \\
	\hX_{\tau} = \xi, \quad \hY_{T} = g_x(\hX_{T},\Law(\hX_{T}| \tF_T))  
\end{gather*}
where $\xi \in \Ltwo_{\F_\tau}$, then the following estimate holds:
\begin{equation}\label{fbsde_full_est}
 \mb{E}\l[ \sup_{\tau \leq t \leq T} [ |X_t|^2 + |Y_t|^2 ] \r] \leq C_{T,C_g}(\mb{E}[\xi^2] +g^2_x(0,\delta_0)+\sigma^2 + \tsg^2) 
 \end{equation}
\end{lemma}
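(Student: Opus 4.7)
The plan is to apply It\^o's formula to the product $\hX_t \hY_t$ on $[\tau,T]$ and to exploit the weak monotonicity assumption \ref{a4} to obtain an a priori $L^2$ bound on $\hY$ that closes over arbitrary time horizons, before propagating this bound to the supremum norm via Doob's inequality and a BDG-type estimate on the forward SDE.

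Applying It\^o's formula to $\hX_t \hY_t$ and taking expectation (the local martingale terms vanish by the assumed $\H^2$ integrability) yields the identity $\mb{E}[\hX_T \hY_T] - \mb{E}[\xi \hY_\tau] = -\mb{E}\int_\tau^T \hY_t^2\, dt + \sigma\, \mb{E}\int_\tau^T \hZ_t\, dt + \tsg\, \mb{E}\int_\tau^T \htZ_t\, dt$. Because $\Law(\hX_T \mid \tF_T) = \hm_T$, applying \ref{a4} conditionally on $\tF_T$ to the pair $(\hX_T, 0)$ (whose conditional laws are $\hm_T$ and $\delta_0$) and then taking expectation produces the crucial lower bound $\mb{E}[\hX_T \hY_T] = \mb{E}[\hX_T g_x(\hX_T, \hm_T)] \geq g_x(0,\delta_0)\, \mb{E}[\hX_T]$, which is \emph{linear} (not quadratic) in $\hX_T$.

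Next I will control the remaining terms. Since $\hY$ is a martingale (the BSDE has zero driver), $\mb{E}[\hY_\tau^2] \leq \mb{E}[\hY_T^2]$ and the BSDE energy identity gives $\mb{E}\int_\tau^T (\hZ_t^2 + \htZ_t^2)\, dt = \mb{E}[\hY_T^2] - \mb{E}[\hY_\tau^2]$. The Lipschitz conditions \ref{a1} and \ref{a3}, combined with the identity $W_2^2(\hm_T, \delta_0) = \mb{E}[\hX_T^2 \mid \tF_T]$, yield $\mb{E}[\hY_T^2] \leq C (g_x^2(0,\delta_0) + C_g^2\, \mb{E}[\hX_T^2])$. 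Applying Cauchy-Schwarz and Young's inequality to each cross term (with Young parameters that may be chosen as small as desired, thanks to the linear form of the monotonicity lower bound above) produces an estimate of the shape $\mb{E}\int_\tau^T \hY_t^2\, dt \leq (\text{terms independent of }\hX) + \kappa\, \mb{E}[\hX_T^2]$ for any prescribed $\kappa > 0$. Substituting the elementary forward SDE bound $\mb{E}[\hX_T^2] \leq 4\bigl(\mb{E}[\xi^2] + T\, \mb{E}\int_\tau^T \hY_t^2\, dt + T(\sigma^2 + \tsg^2)\bigr)$ and fixing $\kappa$ so that $4\kappa T < 1$ closes the forward-backward loop and delivers $\mb{E}\int_\tau^T \hY_t^2\, dt \leq C_{T, C_g}(\mb{E}[\xi^2] + g_x^2(0,\delta_0) + \sigma^2 + \tsg^2)$.

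Finally the supremum estimate follows from standard tools: Doob's $L^2$ inequality on the martingale $\hY$ gives $\mb{E}[\sup_{\tau \leq t \leq T} \hY_t^2] \leq 4\, \mb{E}[\hY_T^2]$, which combined with the Lipschitz bound on $\hY_T$ and the previous step delivers the required estimate for $\hY$; a BDG-type bound on the forward SDE then controls $\mb{E}[\sup_{\tau \leq t \leq T} \hX_t^2]$ in terms of $\mb{E}[\xi^2]$, $\mb{E}\int_\tau^T \hY_t^2\, dt$, and the noise variances. The main obstacle is this global closure step: the Lipschitz-in-$m$ dependence of $g_x$ would by itself only yield a quadratic control of $\mb{E}[\hX_T \hY_T]$ in $\hX_T$, forcing a small-time restriction on the coupling, and \ref{a4} is precisely what replaces this by the linear lower bound permitting absorption for arbitrary $T$.
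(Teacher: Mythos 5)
Your proposal is correct and follows essentially the same route as the paper: It\^o on $\hX_t\hY_t$, the weak monotonicity condition \ref{a4} applied conditionally on $\tF_T$ against the pair $(\hX_T,0)$ to get the linear-in-$\hX_T$ lower bound, the martingale/energy identity for $\hY$ combined with the Lipschitz bounds \ref{a1} and \ref{a3} on the terminal condition, Young's inequality with a small parameter to absorb the $\mb{E}[\hX_T^2]$ term via the forward equation, and finally Doob/BDG for the supremum bounds. The only cosmetic difference is the direction of the final substitution (you plug the forward bound into the $\hY$ estimate, the paper plugs the $\hY$ estimate into the Doob bound for $\sup_t|\hX_t|^2$), which is equivalent.
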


Let $\hX = (\hX_t)_{s \leq t \leq T}$ denote the state process corresponding to the control $\ha^{s,T,\eta}$. Then by using estimate (\ref{fbsde_full_est}), we get
\begin{align*}
\mb{E} \l[ \int_s^T( \ha^{s,T,\eta})^2 dt \r]  &=  \mb{E} \l[ \int_s^\tau( \ha^{s,\tau,\eta})^2 dt  \r]+ \mb{E} \l[\int_\tau^T (\ha^{\tau,T,\hX_\tau})^2dt  \r]  \\
& \leq   \mb{E} \l[ \int_s^\tau( \ha^{s,\tau,\eta})^2 dt  \r] + C_{T,C_g} \l[ \mb{E}[\hX^2_\tau]  + g^2_x(0,\delta_0) + \sigma^2 + \tsg^2 \r] \\
&\leq  \tilde{C}_{T,C_g} \l( \mb{E} \l[ \int_s^\tau(\ha^{s,\tau,\eta})^2dt \r] + \mb{E}[\eta^2] + g^2_x(0,\delta_0) + \sigma^2 + \tsg^2 \r) < \infty
\end{align*}
which implies $\ha^{s,T,\eta} \in \H^{2}([s,T];\R)$.  The proof of Lemma \ref{alpha_estimate} is presented in the next section. 

We have shown that $s \in \Gamma$. From Lemma \ref{phi2}, we know that $\gamma'$ depends only on $C_u$ which is independent of $\tau$. Therefore, we can select $\gamma_0$ sufficiently small so that $\gamma_0 <\gamma' $, so that $\tau - \gamma' \leq t_0+\gamma_0-\gamma' < t_0$. Thus, we can select $s$ to be strictly less than $t_0$ or $s=0$ creating a contradiction as we assumed $0 \notin \Gamma$ and $t_0 = \inf \Gamma$. Therefore, $0 \in \Gamma$ and we have completed the proof.

\end{proof}


\section{Main Lemma}\label{lemma}

In this section, we prove all the lemmas that was used in Theorem \ref{main1}. The first one is a priori estimate of the so-called ``decoupling field" while the second lemma is a construction of $\tsg$-MFG solution using Banach fixed point theorem over a small time duration. The third lemma give an FBSDE estimate needed to ensure that the extended control remains admissible.

\subsection{Proof of Lemma \ref{decoupling}}
  
Let $x_{1},x_{2} \in \R$ and $(X^{i}_{t},Y^{i}_{t},Z^{i}_{t},\tZ^{i}_{t})_{\tau \leq t \leq T}$ denote the solution to the FBSDE (\ref{fbsde1}) with initial condition $X^{i}_{\tau} = x_{i}, i=1,2$. Let $\Delta Y_{t} = Y^{1}_{t}-Y^{2}_{t}$ and $\Delta X_{t} = X^{1}_{t}-X^{2}_{t}$, then by applying \ito's lemma to $\Delta Y_t \Delta X_t$ from $\tau$ to $T$ and taking expectation conditional on $\F_\tau$ (denote by $\mb{E}_\tau[\cdot])$, we get 
$$ \mb{E}_\tau[\Delta Y_T \Delta X_T] - \Delta Y_\tau \Delta X_\tau = - \mb{E}_{\tau}\l[ \int_{\tau}^{T}\Delta Y_{t}^{2} dt \r] $$
By convexity assumption \ref{a2}, it follows that
$$ \Delta Y_T \Delta X_T = \l(g_{x}(X^{1}_{T},\Law(\hX^{\tau,\eta}_T| \tF_T))-g_{x}(X^{2}_{T},\Law(\hX^{\tau,\eta}_T| \tF_T))\r)\Delta X_{t} \geq 0$$
Thus,
 $$ \Delta Y_{\tau} \Delta X_{\tau} = \mb{E}_{\tau}\l[\Delta Y_T \Delta X_T \r] +  \mb{E}_{\tau}\l[ \int_{\tau}^{T}\Delta Y_{t}^{2} dt \r]  \geq 0  $$
 That is, 
 $$ (u(x_{1},\eta)-u(x_{2},\eta))(x_{1}-x_{2}) \geq 0$$



Now we proceed to show (\ref{bddum}). Let $\eta_1,\eta_2 \in \Ltwo_{\F_\tau}$ and we consider $u(x_1,\eta_1)$ and $u(x_2,\eta_2)$. For $i=1,2$, let  $(X^i_{t},Y^i_{t},Z^i_{t},\tZ^i_{t})_{\tau \leq t \leq T}$ denote the solution to the corresponding FBSDE (\ref{fbsde1}) from the definition of $u(x_i,\eta_i)$. In other words, they satisfy
\begin{gather}
	dX^i_{t} = -Y^i_{t}dt + \sg dW_{t} +\tsg \tdW_t	\nonumber				 \\
	dY^i_{t} = Z^i_{t}dW_{t} + \tZ^i_{t}\tdW_{t}	\label{fbsdeInd}		 \\
	X^i_{\tau} = x_i, Y^i_{T} = g_{x}(X^i_{T},\Law(\hX^{\tau,\eta^i}_T|\tF_T))  \nonumber
\end{gather}
where $(\hX_t^{\tau,\eta^i})_{\tau \leq t \leq T}$ is given by
$$ \hX_t^{\tau,\eta} = \eta + \int_\tau^t\ha^{\tau,\eta}_t dt + \sigma(W_t - W_\tau) + \tsg (\tW_t - \tW_\tau), $$
and $(\ha^{\tau,\eta_i}_t)_{\tau \leq t \leq T}$ is the $\tsg$-MFG solution over $[\tau,T]$ with initial $\eta_i$, i.e. the unique fixed point of the map $\Phi^{\tau,T,\eta^i,g_x}$. For notational convenience, we write $\hX^{\tau,\eta^i}_t$ as $\hX^i_t$. We begin by getting a bound on $\hX^1_t-\hX^2_t$ before we go back to bound $u(x_1,\eta_1) - u(x_2,\eta_2)$. From the definition of $\Phi^{\tau,T,\eta^i,g_x}$ and the fact that $(\ha^{\tau,\eta_i}_t)_{\tau \leq t \leq T}$ is the unique fixed point of this map, it follows that $(\hX^i_t)_{\tau \leq t \leq T}$ is the forward process of the unique solution of FBSDE
\begin{gather*}
	d\hX^i_{t} = -\hY^i_{t}dt + \sg dW_{t} +\tsg \tdW_t					 \\
	d\hY^i_{t} = \hZ^i_{t}dW_{t} + \htZ^i_{t}\tdW_{t}			 \\
	\hX^i_{\tau} = \eta^i, \hY^i_{T} = g_{x}(\hX^i_{T},\Law(\hX^i_T|\tF_T))  
\end{gather*}
Let $\Delta \hX_{t} = \hX^1_{t}-\hX^2_{t}$ and define $\Delta \hY_{t}, \Delta \hZ_{t}, \Delta \hat{\tZ}_{t}, \Delta X_t, \Delta Y_t, \Delta Z_t, \Delta \tZ_t, \Delta x, \Delta \eta $ similarly. Then note that $(\Delta \hX_{t},\Delta \hY_{t}, \Delta \hZ_t, \Delta \htZ_t)_{\tau \leq t \leq T}$ satisfies
\begin{gather}
 d\Delta \hX_{t} = - \Delta \hY_{t} dt \nonumber \\
 d\Delta \hY_{t} = \Delta \hZ_{t} dW_{t} + \Delta \htZ_{t} \tdW_{t} \label{fbsdeDelta} \\
 \Delta \hX_{\tau} = \Delta \eta, \quad \Delta \hY_{T} = g_{x}(\hX^1_{T},\Law(\hX^1_T|\tF_T)) - g_{x}(\hX^2_{T},\Law(\hX^2_T|\tF_T)), \nonumber
 \end{gather}
so we get by \ito's lemma that
$$ d(\Delta \hY_{t}\Delta \hX_{t}) = - (\Delta \hY_{t})^2 dt + \Delta \hX_{t} d\Delta \hY_{t} $$
Taking integral from $\tau$ to $T$ and taking expectation conditional on $\tF_\tau$ (where $\tmb{E}_\tau[\cdot] := \mb{E}[\cdot | \tF_\tau]$), we have
\begin{align*}
\int_{\tau}^{T} \tmb{E}_{\tau}[\Delta \hY^{2}_{t}] dt	&\leq  \tmb{E}_\tau[\Delta \hX_\tau \Delta \hY_\tau] - \tmb{E}_{\tau} (\Delta \hY_{T}\Delta \hX_{T})  \\
&=  \tmb{E}_\tau[\Delta \hX_\tau \Delta \hY_\tau]- \tmb{E}_{\tau} ((g_{x}(\hX^1_{T},\Law(\hX^1_T|\tF_T))-g_{x}(\hX^2_{T},\Law(\hX^2_T|\tF_T)))\Delta \hX_{T})
\end{align*}
Using \ref{a4}, we get
$$  \tmb{E}_{\tau} \l[ (g_{x}(\hX^1_{T},\Law(\hX^1_T|\tF_T))-g_{x}(\hX^2_{T},\Law(\hX^2_T|\tF_T)))\Delta \hX_{T}\r] \geq 0 $$
so that, by Young's inequality, 
\begin{align}
\int_{\tau}^{T} \tmb{E}_{\tau}[\Delta \hY^{2}_{t}] dt	&\leq  \tmb{E}[\Delta \hX_\tau \Delta \hY_\tau] \nonumber\\
& \leq \frac{1}{4\eps} \tmb{E}[\Delta \eta^2] + \eps \tmb{E}[\Delta \hY_\tau^2]  \label{b1}
\end{align}
for any $\eps > 0$. \del{Applying \ito's  lemma to $(\Delta \hY_t)^2$ yields,}
\dell{$$ \tmb{E}_\tau[\Delta \hY_t^2] = \tmb{E}_\tau[\Delta \hY_\tau^2] +\int_\tau^t \tmb{E}_\tau[ \Delta \hZ_t^2 + \Delta \htZ_t^2] dt  \geq  \tmb{E}_\tau[\Delta \hY_\tau^2]$$}
\add{Since $\Delta \hY_t$ is a martingale, it follows that $ \tmb{E}_\tau[\Delta \hY_\tau^2] \leq \tmb{E}_\tau[\Delta \hY_t^2] $  }for all $t \in [\tau,T]$. Thus, together with the fact that $\tau \leq T-\frac{\gamma}{2}$, we have
\begin{equation}\label{bY}  
\tmb{E}_\tau[\Delta \hY_\tau^2] \leq \frac{1}{T-\tau}\int_\tau^T \tmb{E}_\tau[\Delta \hY_t^2] dt \leq \frac{2}{\gamma}\int_\tau^T \tmb{E}_\tau[\Delta \hY_t^2] dt
\end{equation}
Similarly, we have
\begin{equation}\label{bYY}  
\tmb{E}_\tau[\Delta Y_\tau^2] \leq \frac{1}{T-\tau}\int_\tau^T \tmb{E}_\tau[\Delta Y_t^2] dt \leq \frac{2}{\gamma}\int_\tau^T \tmb{E}_\tau[\Delta Y_t^2] dt
\end{equation}
\add{The inequality \eqref{bYY} will be used later in the proof. Next, }going back to (\ref{b1}) and use (\ref{bY}), it follows that
$$ \int_{\tau}^{T} \tmb{E}_{\tau}[\Delta \hY^{2}_{t}] dt \leq \frac{1}{4\eps}\tmb{E}_\tau[\Delta \eta^2] + \frac{2\eps}{\gamma} \int_{\tau}^{T} \tmb{E}_{\tau}[\Delta \hY^{2}_{t}] dt $$
Therefore, for sufficiently small $\eps$, we have
$$ \int_{\tau}^{T} \tmb{E}_{\tau}[\Delta \hY^{2}_{t}] dt \leq C_{\eps,\gamma}\tmb{E}_\tau[ \Delta \eta^2] $$
Thus, by (\ref{fbsdeDelta}), 
\begin{equation}\label{boundRef}
	\tmb{E}_\tau(\Delta \hX_T)^2 =  \tmb{E}_\tau\l( \Delta \eta - \int_\tau^T \hY_t dt \r)^2 \leq C'_{\eps,\gamma} \tmb{E}_\tau[\Delta \eta]^2 
\end{equation}
Now we go back to showing (\ref{bddum}). Note that $(\Delta X_{t},\Delta Y_{t}, \Delta Z_t, \Delta \tZ_t)_{\tau \leq t \leq T}$ satisfies
\begin{gather*}
 d\Delta X_{t} = - \Delta Y_{t} dt  \\
 d\Delta Y_{t} = \Delta Z_{t} dW_{t} + \Delta \tZ_{t} \tdW_{t} \\
 \Delta X_{\tau} = \Delta x, \quad \Delta Y_{T} = g_{x}(X^1_{T},\Law(\hX^1_{T}|\tF_T)) - g_{x}(X^2_{T},\Law(\hX^2_T|\tF_T)) 
 \end{gather*}
so we have
\begin{equation}\label{bX}
  \tmb{E}_\tau(\Delta X_T)^2 =  \tmb{E}_\tau\l(\Delta x - \int_\tau^T Y_t dt \r)^2 \leq 2(\Delta x)^2 +  2T \int_\tau^T \mb{E}(\Delta Y_s)^2 ds,
 \end{equation}
 then following the same sequence of inequalities as we did earlier, 
\begin{align*}
 \int_{\tau}^{T}  \tmb{E}_\tau[\Delta Y^{2}_{t}] dt &=   \tmb{E}_\tau(\Delta X_\tau \Delta Y)-  \tmb{E}_\tau \l[ (g_{x}(X^1_{T},\Law(\hX^1_T|\tF_T))-g_{x}(X^2_{T},\Law(\hX^2_T|\tF_T)))\Delta X_{T}\r] \\
&=  \tmb{E}_\tau(\Delta X_\tau \Delta Y_\tau) -  \tmb{E}_\tau \l[ (g_{x}(X^1_{T},\Law(\hX^1_T|\tF_T))-g_{x}(X^2_{T},\Law(\hX^1_T|\tF_T))\Delta X_T\r] \\
& \qquad +\tmb{E}_\tau\l[ g_{x}(X^2_{T},\Law(\hX^2_T|\tF_T))-g_{x}(X^2_{T},\Law(\hX^1_T|\tF_T)))\Delta X_{T}) \r]
\end{align*}
By convexity assumption \ref{a2}, 
$$ \tmb{E}_\tau \l[ (g_{x}(X^1_{T},\Law(\hX^1_T|\tF_T))-g_{x}(X^2_{T},\Law(\hX^1_T|\tF_T)))\Delta X_T\r]  \geq 0 $$
and by Lipschitz continuity (in $m$) assumption \ref{a3}, 
$$ \tmb{E}_\tau\l[ g_{x}(X^2_{T},\Law(\hX^2_T|\tF_T))-g_{x}(X^2_{T},\Law(\hX^1_T|\tF_T)))\Delta X_{T}) \r]
 \leq C_g \tmb{E}_\tau \l((\tmb{E}_T[\Delta \hX_T^2 ])^{\frac{1}{2}}\Delta X_{T}\r) $$
 where $\tmb{E}_T[\cdot] := \mb{E}[\cdot | \tF_T]$. 
Thus, 
\begin{align*}
\int_{\tau}^{T}  \tmb{E}_\tau[\Delta Y^{2}_{t}] dt  
&\leq   \tmb{E}_\tau(\Delta X_\tau \Delta Y_\tau) +C_g  \tmb{E}_\tau ((\tmb{E}_T[\Delta \hX_T^2])^{\frac{1}{2}}\Delta X_{T}) \\
&\leq C_{\eps,C_g}(\Delta x^2 + \tmb{E}_\tau[\Delta \hX_T^2]) + \eps\l(  \tmb{E}_\tau[\Delta Y_\tau^2]+  \tmb{E}_\tau[\Delta X_T^2] \r)
\end{align*}
From  (\ref{bYY}), (\ref{boundRef}), and (\ref{bX}), it follows that 
\begin{align*}
\int_{\tau}^{T}  \tmb{E}_\tau[\Delta Y^{2}_{t}] dt  
&\leq C'_{\eps,C_g}(\Delta x^2 + \mb{E}[\Delta \hX_T^2]) + \eps\l(  \tmb{E}_\tau[\Delta Y_\tau^2]+ 2T\int_\tau^T  \tmb{E}_\tau[\Delta Y_s^2] ds \r) \\
&\leq C'_{\eps,\gamma}(\Delta x^2 + \tmb{E}_\tau[\Delta \eta ^2]) + \eps \l(\frac{2}{\gamma}+2T \r)\int_\tau^T  \tmb{E}_\tau[\Delta Y_s^2] ds 
\end{align*}
Thus, by selecting $\eps$ sufficiently small, we get
$$  \int_\tau^T  \tmb{E}_\tau[\Delta Y_s^2] ds \leq C_{\gamma,T,C_g}(\Delta x^2 + \tmb{E}_\tau[ \Delta \eta ^2]) $$
Therefore, using (\ref{bYY}) again, 
$$  \tmb{E}_\tau[\Delta Y_\tau^2]  \leq \frac{2}{\gamma} \int_\tau^T  \tmb{E}_\tau[\Delta Y_s]^2 ds \leq  \tilde{C}_{\gamma,T,C_g}(\Delta x^2 + \tmb{E}_\tau[\Delta \eta ]^2) $$
Observe that all the coefficients in the FBSDE (\ref{fbsdeInd}) are $\tF_t$-progressively measurable, therefore, $Y_\tau$ is $\tF_\tau$-measurable. Hence, 
$$ \Delta Y_\tau^2 =  \tmb{E}_\tau[\Delta Y_\tau^2]  \leq   \tilde{C}_{\gamma,T,C_g}(\Delta x^2 + \tmb{E}_\tau[\Delta \eta ]^2)  $$
or equivalently,
$$ (u(x_1,\eta_1)-u(x_1,\eta_2))^2 \leq \tilde{C}_{\gamma,T,C_g}((x_1-x_2)^2 + \mb{E}[ (\eta_1-\eta_2)^2 | \tF_\tau] ) $$
as desired.

\subsection{Proof of Lemma \ref{phi2}} \label{fix} 

 Let $\ha^1,\ha^2 \in \H^2([s,\tau];\R)$, $(X^i_t,Y^i_t,Z^i_t,\tZ^i_t)_{s \leq t \leq \tau}$ denote the solution of corresponding FBSDE (\ref{fbsde_phi}) from the definition of $\Phi^{s,\tau,\eta,u}(\ha^i)$, $(\hX_t^i)_{s \leq t \leq \tau}$ denote the state process corresponding to $(\ha^i_t)_{s \leq t \leq \tau}$ with initial $\eta \in \Ltwo_{\F_s}$ as defined by (\ref{stateProcess}). Let $\Delta X_{t} = X^1_{t}-X^2_{t}$ and define $\Delta Y_t, \Delta Z_t, \Delta \tZ_t, \Delta \ha_t, \Delta \hX_t$ similarly. Note that $(\Delta X_{t},\Delta Y_{t}, \Delta Z_t, \Delta \tZ_t)_{s \leq t \leq \tau}$ satisfies
\begin{gather*}
 d\Delta X_{t} = - \Delta Y_{t} dt  \\
 d\Delta Y_{t} = \Delta Z_{t} dW_{t} + \Delta \tZ_{t} \tdW_{t} \\
 \Delta X_{s} = 0, \quad \Delta Y_{\tau} = u(X^1_{\tau},\hX^1_{\tau}) - u(X^2_{\tau},\hX^2_\tau) 
 \end{gather*}
 \del{By applying \ito's lemma to $\Delta Y_t^2$, we get}
 \dell{$$ \mb{E}[\Delta Y_t^2] = \mb{E}[\Delta Y_\tau^2] -\int_t^\tau \mb{E}[ \Delta Z_t^2 + \Delta \tZ_t^2] dt  \leq  \mb{E}[\Delta Y_\tau^2]$$}
\add{Since $\Delta Y_t$ is a martingale, it follows that $ \mb{E}[\Delta Y_t^2] \leq \mb{E}[\Delta Y_\tau^2]$ for all $t \in [s,\tau]$ and} thus,
\begin{equation}\label{bdx}
 \mb{E}[\Delta X_\tau^2] = \mb{E}\l[ \l( \int_s^\tau \Delta Y_t dt \r)^2 \r]  \leq (\tau-s) \int_s^\tau \mb{E}[\Delta Y_t^2] dt \leq (\tau-s)^2 \mb{E}[\Delta Y_\tau^2] 
 \end{equation}
Furthermore,
\begin{equation}\label{bdhx}
 \mb{E}[\Delta \hX_\tau]^2  = \mb{E}\l[ \l( \int_s^\tau \Delta \ha_t dt \r)^2 \r] \leq (\tau-s) \mb{E}\l( \int_s^\tau \Delta \ha_t^2 \r) dt 
\end{equation}
Therefore, by (\ref{bddum}), (\ref{bdx}), and (\ref{bdhx}), we have
\begin{align*}
	\mb{E}[\Delta Y_\tau^2]  &= \mb{E}[ u(X^1_{\tau},\hX^1_\tau) - u(X^2_{\tau},\hX^2_\tau) ] \\
						&\leq C_u\l( \mb{E}[\Delta X_\tau^2] + \mb{E}[\mb{E}[ \Delta \hX_\tau^2 | \tF_\tau] ] \r) \\
						&\leq  C_u \l[  (\tau-s)^2\mb{E}[\Delta Y_\tau^2] + (\tau-s)\mb{E}\l( \int_s^\tau (\Delta \ha_t)^2 dt \r)  \r]
\end{align*}
Hence, for $\tau-s$ sufficiently small such that $C_u(\tau-s)^2 < 1$, we get
$$ \mb{E}[\Delta Y_\tau^2] \leq \frac{C_u(\tau-s)}{1-C_u(\tau-s)^2}\mb{E}\l( \int_s^\tau (\Delta \ha_t)^2 dt \r) =\frac{C_u(\tau-s)}{1-C_u(\tau-s)^2} \| \Delta \ha \|^2_{\H^2([s,\tau];\R)} $$
and it follows that
$$ \| \Phi(\ha^1)-\Phi(\ha^2) \|^2_{\H^2([s,\tau];\R)} =  \int_s^\tau \mb{E}[\Delta Y_t^2] dt  \leq (\tau-s)\mb{E}[\Delta Y_\tau^2] \leq  \frac{C_u(\tau-s)^2}{1-C_u(\tau-s)^2} \| \ha^1-\ha^2 \|^2_{\H^2([s,\tau];\R)} $$
As a result, we get a contraction map for sufficiently small $\tau-s$ depending only on $C_u$ as desired.

\subsection{Proof of Lemma \ref{alpha_estimate}}

 Applying \ito's lemma to $\hX_t\hY_t$ from $\tau$ to $T$ yields
$$ \mb{E}[\hX_T g_x(\hX_T,\Law(\hX_T|\tF_T))] - \mb{E}[\xi \hY_\tau] = - \int_\tau^T \mb{E}[\hY_t^2] dt + \int_\tau^T \mb{E}[ \sigma \hZ_t + \tsg \htZ_t] dt $$
From the weak monotonicity assumption \ref{a4}, we get
$$\mb{E}[\hX_T g_x(\hX_T,\Law(\hX_T|\tF_T))]  \geq \mb{E}[\hX_T g_x(0,\delta_0)] $$
Thus,
\begin{align*}
\int_\tau^T \mb{E}[\hY_t^2] dt & \leq \mb{E}[\xi \hY_\tau] - \mb{E}[ \hX_T g_x(0,\delta_0) ] +  \int_\tau^T \mb{E}[ \sigma \hZ_t + \tsg \htZ_t] dt \\
	&\leq C_{\eps,T} \l[  \mb{E}[\xi^2] + g^2_x(0,\delta_0) + \sigma^2 + \tsg^2 \r] + \eps \mb{E}\l[\hX_T^2 + \hY_\tau^2 + \int_\tau^T [\hZ_r^2 +  \htZ_r^2]dr \r] 
\end{align*}
Applying \ito's lemma to $\hY_t^2$ from $t$ to $T$ together with the Lipschitz assumption \ref{a1} and \ref{a3} give, for all $t \in [\tau,T]$, 
\begin{equation}\label{bound_y2} 
\mb{E}[\hY_t^2] + \int_t^T \mb{E}[  \hZ_r^2 +  \htZ_r^2]dr \leq \mb{E}[\hY_T^2] = \mb{E}[g^2_x(\hX_T,\Law(\hX_T|\tF_T))] \leq C_g(g^2_x(0,\delta_0) + \mb{E}[\hX_T^2]) 
\end{equation}
Thus, we get a bound of the form
$$ \int_\tau^T \mb{E}[\hY_t^2] dt  \leq C_{\eps,T,C_g}(\mb{E}[\xi^2] + g^2_x(0,\delta_0) + \sigma^2+\tsg^2) + \eps\mb{E}[\hX_T^2] $$
Note that by Doob's Martingale inequality, we also have
$$ \mb{E}[\sup_{\tau\leq t \leq T} |\hX_t|^2 ] \leq C_T\l( \int_\tau^T \mb{E}[\hY_t^2] dt  + \sigma^2 + \tsg^2 \r) $$
Combining with the previous bound, we can select $\eps$ sufficiently small depending only on $T,C_g$ to get 
\begin{equation}\label{bound_x}
  \mb{E}[\sup_{\tau\leq t \leq T} |\hX_t|^2 ] \leq C_{T,C_g}(\mb{E}[\xi^2] + g^2_x(0,\delta_0) + \sigma^2+\tsg^2) 
  \end{equation}
Applying Doob's Martingale inequality to $Y_t$ and use (\ref{bound_y2}) and (\ref{bound_x}), we then complete the proof.

\section{Summary and Conclusions}\label{summary}
	
	The aim of this paper is to establish an existence and uniqueness result for a Mean Field Game in the presence of common noise. We assume a linear-convex control problem with the quadratic running cost depending only on the control and a general convex and Lipschitz terminal cost function. In addition, we assume that the terminal cost also satisfies what we call a weak monotonicity condition (see (\ref{monotone})).
	
	For the existence result, our assumptions are similar to those used in \cite{Carmona1} for a $0$-MFG problem except that we replace the weak mean-reverting assumption (\ref{weakMR}) with the weak monotonicity condition (\ref{monotone}). Our existence proof is also different from those of a $0$-MFG problem which rely on the Schauder fixed point theorem. Instead, we apply the Banach fixed point theorem to given existence of a solution over a small time duration and show that it can be extended to an arbitrary time duration. For the uniqueness part, under the convexity assumption on the cost functions, our result extends those of a $0$-MFG problem as our weak monotonicity condition is weaker than the monotonicity condition used in \cite{cardaliaguet2010,Carmona1}. 
	
	Although we assume a simple state process and running cost, our result is expected to hold for a more general setting, namely a model with a linear state process
	$$ dX_t = (A\alpha_t + BX_t + C\bar{m}_t) dt + \sigma dW_t + \tsg d\tW_t$$
and a running cost of the form
$$ f(t,x,\alpha,m) = f_0(t,x,\alpha)+f_1(t,x,m) $$
where $f_0$ is Lipschitz and convex in $(x,\alpha)$ and strongly convex in $\alpha$, $f_1$ is Lipschitz in $(x,m)$ and satisfies the weak monotonicity condition.


\bibliographystyle{abbrv}
\bibliography{../MFG}

\begin{thebibliography}{10}

\bibitem{Antonelli}
F.~Antonelli.
\newblock Backward-forward stochastic differential equations.
\newblock {\em The Annals of Applied Probability}, pages 777--793, 1993.

\bibitem{bensoussan2013}
A.~Bensoussan, J.~Frehse, and P.~Yam.
\newblock {\em Mean Field Games and Mean Field Type Control Theory}.
\newblock Springer, 2013.

\bibitem{bensoussan2014master}
A.~Bensoussan, J.~Frehse, and S.~C.~P. Yam.
\newblock The master equation in mean field theory.
\newblock {\em Journal de Math{\'e}matiques Pures et Appliqu{\'e}es}, 2014.

\bibitem{bensoussan2014linear}
A.~Bensoussan, J.~Sung, P.~Yam, and S.~P. Yung.
\newblock Linear-quadratic mean field games.
\newblock {\em arXiv preprint arXiv:1404.5741}, 2014.

\bibitem{cardaliaguet2010}
P.~Cardaliaguet.
\newblock Notes on mean field games.
\newblock {\em from P.-L. Lions' lectures at College de France}, 2010.

\bibitem{carmona2013}
R.~Carmona and F.~Delarue.
\newblock Forward-backward stochastic differential equations and controlled
  mckean vlasov dynamics.
\newblock {\em arXiv preprint arXiv:1303.5835}, 2013.

\bibitem{Carmona1}
R.~Carmona and F.~Delarue.
\newblock Probabilistic analysis of mean-field games.
\newblock {\em SIAM Journal on Control and Optimization}, 51(4):2705--2734,
  2013.

\bibitem{carmona2014master}
R.~Carmona and F.~Delarue.
\newblock The master equation for large population equilibriums.
\newblock In {\em Stochastic Analysis and Applications 2014}, pages 77--128.
  Springer, 2014.

\bibitem{carmona2013control}
R.~Carmona, F.~Delarue, and A.~Lachapelle.
\newblock Control of mckean--vlasov dynamics versus mean field games.
\newblock {\em Mathematics and Financial Economics}, 7(2):131--166, 2013.

\bibitem{carmona2014commonnoise}
R.~Carmona, F.~Delarue, and D.~Lacker.
\newblock Mean field games with common noise.
\newblock {\em arXiv preprint arXiv:1407.6181}, 2014.

\bibitem{carmona2013mean}
R.~Carmona, J.-P. Fouque, and L.-H. Sun.
\newblock Mean field games and systemic risk.
\newblock {\em Available at SSRN 2307814}, 2013.

\bibitem{gomes2013survey}
D.~Gomes and J.~Sa{\'u}de.
\newblock Mean field games models---a brief survey.
\newblock {\em Dynamic Games and Applications}, pages 1--45, 2013.

\bibitem{gomes2010discrete}
D.~A. Gomes, J.~Mohr, and R.~R. Souza.
\newblock Discrete time, finite state space mean field games.
\newblock {\em Journal de math{\'e}matiques pures et appliqu{\'e}es},
  93(3):308--328, 2010.

\bibitem{gomes2012cont}
D.~A. Gomes, J.~Mohr, and R.~R. Souza.
\newblock Continuous time finite state mean field games.
\newblock {\em Applied Mathematics \& Optimization}, pages 1--45, 2012.

\bibitem{gomes2013}
D.~A. Gomes and V.~K. Voskanyan.
\newblock Extended mean field games-formulation, existence, uniqueness and
  examples.
\newblock {\em arXiv preprint arXiv:1305.2600}, 2013.

\bibitem{gueant2009}
O.~Gu{\'e}ant.
\newblock A reference case for mean field games models.
\newblock {\em Journal de math{\'e}matiques pures et appliqu{\'e}es},
  92(3):276--294, 2009.

\bibitem{gueant2011graph}
O.~Gu{\'e}ant.
\newblock An existence and uniqueness result for mean field games with
  congestion effect on graphs.
\newblock {\em arXiv preprint arXiv:1110.3442}, 2011.

\bibitem{gueant2012}
O.~Gu{\'e}ant.
\newblock Mean field games equations with quadratic hamiltonian: a specific
  approach.
\newblock {\em Mathematical Models and Methods in Applied Sciences}, 22(09),
  2012.

\bibitem{gueant2011}
O.~Gu{\'e}ant, J.-M. Lasry, and P.-L. Lions.
\newblock Mean field games and applications.
\newblock In {\em Paris-Princeton Lectures on Mathematical Finance 2010}, pages
  205--266. Springer, 2011.

\bibitem{huang2007}
M.~Huang, P.~E. Caines, and R.~P. Malham{\'e}.
\newblock Large-population cost-coupled lqg problems with nonuniform agents:
  Individual-mass behavior and decentralized $\varepsilon$-nash equilibria.
\newblock {\em Automatic Control, IEEE Transactions on}, 52(9):1560--1571,
  2007.

\bibitem{kurtz1988}
T.~G. Kurtz and D.~L. Ocone.
\newblock Unique characterization of conditional distributions in nonlinear
  filtering.
\newblock {\em The Annals of Probability}, pages 80--107, 1988.

\bibitem{lacker2014general}
D.~Lacker.
\newblock A general characterization of the mean field limit for stochastic
  differential games.
\newblock {\em arXiv preprint arXiv:1408.2708}, 2014.

\bibitem{lacker2014translation}
D.~Lacker and K.~Webster.
\newblock Translation invariant mean field games with common noise.
\newblock {\em arXiv preprint arXiv:1409.7345}, 2014.

\bibitem{lasry2006}
J.-M. Lasry and P.-L. Lions.
\newblock Jeux {\`a} champ moyen. i--le cas stationnaire.
\newblock {\em Comptes Rendus Math{\'e}matique}, 343(9):619--625, 2006.

\bibitem{lasry2006_2}
J.-M. Lasry and P.-L. Lions.
\newblock Jeux {\`a} champ moyen. ii--horizon fini et contr{\^o}le optimal.
\newblock {\em Comptes Rendus Math{\'e}matique}, 343(10):679--684, 2006.

\bibitem{lasry2007}
J.-M. Lasry and P.-L. Lions.
\newblock Mean field games.
\newblock {\em Japanese Journal of Mathematics}, 2(1):229--260, 2007.

\bibitem{Ma1999}
J.~Ma and J.~Yong.
\newblock {\em Forward-backward stochastic differential equations and their
  applications}.
\newblock Number 1702. Springer, 1999.

\bibitem{peng1999}
S.~Peng and Z.~Wu.
\newblock Fully coupled forward-backward stochastic differential equations and
  applications to optimal control.
\newblock {\em SIAM Journal on Control and Optimization}, 37(3):825--843, 1999.

\bibitem{pham2009}
H.~Pham.
\newblock {\em Continuous-time stochastic control and optimization with
  financial applications}, volume~61.
\newblock Springer, 2009.

\bibitem{young1999}
J.~Yong and X.~Y. Zhou.
\newblock {\em Stochastic controls: Hamiltonian systems and HJB equations},
  volume~43.
\newblock Springer, 1999.

\end{thebibliography}

\end{document}